\theoremstyle{plain}
\newtheorem{theorem}  {Theorem}  [section]
\newtheorem{lemma}  [theorem]   {Lemma}
\newtheorem{corollary}[theorem] {Corollary}
\newtheorem{construction} [theorem] {Construction}
\newtheorem{fact} [theorem] {Fact}
\newtheorem{proposition} [theorem] {Proposition}
\newtheorem*{claim*} {Claim}
\newtheorem*{problem*} {Problem}
\theoremstyle{definition}
\newtheorem{definition}[theorem] {Definition}
\newtheorem{example}[theorem] {Example}
\newcommand{\instance}[1]{ ~\ \par
 Instance: #1 \\ }
\newcommand{\decision}[1]{ ~\ \indent
 Decision: #1 }
\newcommand{\claimproof}{\renewcommand{\qedsymbol}{$\diamond$}}
\newcommand{\exqed}{\hfill $\triangle$}
\newcommand{\mylabel}[2]{#2\def\@currentlabel{#2}\label{#1}}
\tikzset{
  vert/.style={circle, draw=black!100,fill=black!100,thick, inner sep=0pt, minimum size=2mm, anchor=center}, 
  empty/.style={circle,  fill=none},
  labvert/.style={circle, draw=black!100,fill=none,thick, inner sep=2pt, minimum size=2mm},
 cycle/.style={circle, draw=black!100, fill=none, dotted, inner sep=2pt, minimum size=7mm},
 clump/.style={circle, draw=gray!80, fill=none, minimum size = 2cm},
 square/.style={rectangle, draw=black, fill=none, minimum size = 1cm, rounded corners}   
}
\newcommand{\fsquare}[5][gray!80]{\filldraw[draw=black, fill=#1] ({#2}.center) -- ({#3}.center) -- ({#4}.center) -- ({#5}.center) -- cycle; \draw (#2) -- (#4); \draw (#3) -- (#5); }   
\newcommand{\vsquare}[5][gray!80]{\fsquare[#1]{#2#4}{#3#4}{#3#5}{#2#5}}
\newcommand{\hsquare}[5][gray]{\filldraw[draw=black, fill=#1] ({#2}.center) -- ({#3}.center) -- ({#4}.center) -- ({#5}.center) -- cycle; \draw (#2) -- (#4); }    
\newcommand{\vhsquare}[5][gray]{\hsquare[#1]{#2#4}{#3#4}{#3#5}{#2#5}}
\xdef\csname bb\x\endcsname{\noexpand\ensuremath{\noexpand\mathbf{\x}}}
\xdef\csname mbb\x\endcsname{\noexpand\ensuremath{\noexpand\mathbb{\x}}}
\xdef\csname cc\x\endcsname{\noexpand\ensuremath{\noexpand\mathcal{\x}}}
\xdef\csname ss\x\endcsname{\noexpand\ensuremath{\noexpand\mathscr{\x}}}
\xdef\csname bb\x\endcsname{\noexpand\ensuremath{\noexpand\mathbf{\x}}}
\xdef\csname cc\x\endcsname{\noexpand\ensuremath{\noexpand\mathcal{\x}}}
\xdef\csname ss\x\endcsname{\noexpand\ensuremath{\noexpand\mathscr{\x}}}
\xdef\csname \x\endcsname{\noexpand\ensuremath{\noexpand\mathbb{\x}}}}
 \newcommand{\angles}[1]{\langle #1 \rangle}
\newcommand{\goes}[1]{\buildrel{#1}\over\longrightarrow}
\newcommand{\iso}[1][]{\buildrel {#1} \over \cong}
\newcommand{\PWF}[4]{
  \left\{ \begin{array}{ll}
    #1 & \mbox{ if } #2 \\
    #3 & \mbox{ if } #4 \\
  \end{array}\right.}
\DeclareMathOperator{\bHom}{\mathbf{Hom}}
\DeclareMathOperator{\NP}{NP}
\DeclareMathOperator{\Recol}{Recol}
\DeclareMathOperator{\Recon}{Recon}
 \DeclareMathOperator{\NF}{NonFlat} 
 \DeclareMathOperator{\img}{Im} 
\DeclareMathOperator{\Mix}{Mix}
\DeclareMathOperator{\Hom}{Hom}
\DeclareMathOperator{\coNP}{coNP}
\DeclareMathOperator{\Poly}{P}
\DeclareMathOperator{\PSPACE}{PSPACE}
 \newcommand{\A}{Z}
 \newcommand{\DA}{\bbA}
 \newcommand{\DB}{\bbB}
 \newcommand{\DC}{\bbC}
 \newcommand{\DE}{\bbE}
 \newcommand{\DF}{\bbF}
 \newcommand{\DX}{\bbX}
 \newcommand{\DsC}{\bbC}
 \newcommand{\DsH}{\bbH}
 \renewcommand{\DH}{\bbH}
 \newcommand{\DG}{\bbG}
 \newcommand{\Gb}{G_*}
 \newcommand{\sH}{H}
 \newcommand{\ZH}{Z_H}
 \newcommand{\sB}{sZ}
 \newcommand{\sT}{T_*}
 \newcommand{\sZ}{Z_*}
 \newcommand{\vtimes}{\times}
 \newcommand{\ZZ}{{\mbbZ}}
 \newcommand{\ZS}[1][\Sigma]{Z_{#1}}
 \renewcommand{\tilde}{\widetilde}
\newcommand{\Ga}[1][\@nil]{%
  \def\tmp{#1}%
   \ifx\tmp\@nnil
       G'_{*}
    \else
       G_{*}^{(#1)}
    \fi}
\begin{document}

\author{Hyobeen Kim}
\address{Kyungpook National University, Republic of Korea}
\email{hbkim@knu.ac.kr}

\author{Jae-baek Lee}
\address{University of Victoria, Canada}
\email{dlwoqor0923@uvic.ca}

\author{Mark Siggers}
\address{Kyungpook National University, Republic of Korea}
\email{mhsiggers@knu.ac.kr}
\thanks{The last author was supported by the Korean NRF Basic Science Research Program (NRF-2022R1A2C1091566) funded by the Korean government (MEST),
and by the Kyungpook National University Research Fund.}

\title[Mixing for reflexive graphs]{Mixing is hard for triangle-free reflexive graphs}

\keywords{Reflexive graphs, Homomorphism reconfiguration, Mixing, Computational Complexity }

\subjclass[2020]{Primary 05C15; Secondary 05C85}
  %

\begin{abstract}
  In the problem ${\rm Mix}(H)$ one is given a graph $G$ and must decide if the Hom-graph
  ${\rm {\bf Hom}}(G,H)$ is connected.  We show that if $H$ is a triangle-free reflexive graph with at least one cycle, ${\rm Mix}(H)$ is ${\rm coNP}$-complete.  The main part of this is a reduction to the problem ${\rm NonFlat}({\rm{\bf H}})$ for a simplicial complex ${\rm{\bf H}}$, in which one is given a simplicial complex ${\rm{\bf G}}$ and must decide if there are any simplicial maps $\phi$ from ${\rm{\bf G}}$ to ${\rm{\bf H}}$ under which some $1$-cycles of ${\rm{\bf G}}$ maps to homologically nontrivial cycle of ${\rm{\bf H}}$.  We show that for any reflexive graph $H$, if the clique complex ${\rm{\bf H}}$ of $H$ has a free, nontrivial homology group $H_1({\rm{\bf H}})$, then ${\rm NonFlat}({\rm{\bf H}})$ is ${\rm NP}$-complete.     
\end{abstract}

\maketitle

\section{Introduction}

In the last 15 years there have been several papers looking at the `reconfiguration' and `mixing' variations of the classical graph colouring, and of the more general graph homomorphism, or $H$-colouring, problem.  See \cite{Nishimura} for a survey of recent results in the area.

 These variations of the $H$-colouring problem have a common presentation in terms of the `Hom-graph'. 
A {\em homomorphism} $\phi:G \to H$ from a graph $G$ to a graph $H$ is an edge preserving vertex map. As a homomorphism from a graph $G$ to $K_k$ is a $k$-colouring of $G$, a homomorphism to $H$ is also called an $H$-colouring of $G$.  Given graphs $G$ and $H$, $\Hom(G,H)$ is the set of $H$-colourings of $G$. The $\Hom$-graph $\bHom(G,H)$ is the graph on the vertex set $\Hom(G,H)$
in which two vertices $\phi, \psi \in \Hom(G,H)$ are adjacent if for all edges $uv$ of $G$,
$\phi(u)\psi(v)$ is an edge of $H$.

While the well known $H$-colouring problem $\Hom(H)$ asks for an instance graph $G$ if the Hom-graph $\bHom(G,H)$ contains any vertices, the $H$-mixing problem asks if $\Hom(H)$ is connected.

\begin{problem*}[$\Mix(H)$]\label{prob2} 
 \instance{A graph $G$.} 
 \decision{Is $\bHom(G,H)$ connected?} 
\end{problem*}

The $H$-reconfiguration (or $H$-recolouring\footnote{The two problems, defined differently, coincide for graph, but not for digraphs. The distinction is addressed in \cite{BLS21}).}) problem asks if two vertices of the graph $\bHom(G,H)$ are in the same component. 

\begin{problem*}[$\Recon(H)$]\label{prob1}
\instance{A graph $G$ and $f,g\in \Hom(G,H)$.}
\decision{Is there a path in $\bHom(G,H)$ between $f$ and $g$?}
\end{problem*}

Homomorphisms, the Hom-graph, and the above problems can be defined for all relational structures, not just graphs, and it is well known (\cite{BulCSP},\cite{Zhuk}) that the problem $\Hom(H)$ is in $\NP$ for any finite structure $H$, and is either in $\Poly$ or is $\NP$-complete. The problems $\Recon(H)$ and $\Mix(H)$ are not generally in $\NP$, but in $\PSPACE$.  Indeed a path between homomorphisms can be verified one edge at a time in polynomial space, showing that $\Recon(H)$ is in $\PSPACE$ and observing that this can be done for every pair of homomorphisms shows that $\Mix(H)$ is also in $\PSPACE$.

In \cite{GKMP09}, these problems were addressed for binary relational structures $H$. In this setting the $H$-reconfiguration problem was shown to exhibit dichotomy.  Depending on $H$, $\Recon(H)$ is either polynomial time solvable or $\PSPACE$-complete. Once $\Recon(H)$ is known to be polynomial time solvable for a given structure $H$, the problem $\Mix(H)$ drops into the complexity class $\coNP$, as we now have a polynomial certificate of the disconnectedness of $\bHom(G,H)$.  In \cite{GKMP09}, the authors went on to conjecture a trichotomy for $\Mix(H)$ for binary relational structures $H$: depending on $H$, $\Mix(H)$ is polynomial time solvable, $\coNP$-complete, or $\PSPACE$-complete. They conjecture a classification for the trichotomy, and gave strong evidence supporting it.

The expectation is that with graphs $H$ we should have a similar complexity breakdown--
a dichotomy between $\Poly$ and $\PSPACE$-complete for $\Recon(H)$, and a trichotomy between $\Poly$, $\coNP$-complete, and $\PSPACE$-complete for $\Mix(H)$.  The graphs for which the problems are $\PSPACE$-complete should coincide. Though it seems premature to suggest a classification of the graphs $H$ for which these problems fall into the respective complexity classes, some discussion of this is given in \cite{LNS20} for $\Recon(H)$. The proofs in \cite{LNS20} are very topological, and suggest the topology of $H$ may play an important role in the complexity of $\Recon(H)$.  Indeed, the Hom-graph was originated to apply topological ideas to graph colouring problems, so topological methods are perhaps inevitable.

There are few results about the complexity of mixing for graphs. Recall that a graph in which every vertex has a loop is {\em reflexive} and a graph in which no vertices have loops is {\em irreflexive}. For irreflexive graphs, it was shown in \cite{CvdHJ09} that $\Mix(K_3)$ is $\coNP$-complete, and recently in \cite{BM23} this was extended to all odd cycles.  Nothing yet has been proved for reflexive graphs, but there is one basic result from the literature. From \cite{BW00}, we have that if a reflexive graph $H'$ dismantles to $H$, then $\bHom(G,H')$ has the same number of components as $\bHom(G,H)$ for all $G$. So the complexity of $\Mix(H')$ is the same as that of $\Mix(H)$.  From this we conclude  that $\Mix(H)$ is trivial for any (reflexive) dismantlable graph $H$. 

 In the case of the reconfiguration problem one finds a parallel between results for reflexive graphs and irreflexive graphs, and the topological techniques used are `cleaner' in the reflexive setting, so we start there.  The result from \cite{BM23} that $\Mix(C)$ is $\coNP$-complete for any irreflexive odd cycle $C$ suggests, through this parallel, that $\Mix(C)$ should also be $\coNP$-complete for $C$ being any reflexive cycle of girth at least $4$.  Our goal was to prove this, but in doing so we found the proof could be extended, with not too much work to all triangle free reflexive graphs.  

In \cite{LNS21}, results of \cite{Wroch15} were adapted to reflexive graphs, and it was shown that $\Recon(H)$ is polynomial time solvable (for reflexive instances) for any triangle-free reflexive graph $H$.  As discussed above, this yields the following corollary. 

   \begin{fact}\label{fact:coNP}
    For any triangle-free reflexive graph $H$, the problem $\Mix(H)$ is in the complexity class $\coNP$.
   \end{fact}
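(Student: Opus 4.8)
The plan is the standard one for membership in $\coNP$: give a short certificate for each negative instance of $\Mix(H)$ and check it in polynomial time, with the verification delegated to the polynomial-time algorithm for $\Recon(H)$ supplied by \cite{LNS21}. A negative instance is a graph $G$ with $\bHom(G,H)$ disconnected. First I would note that since $H$ is reflexive (and may be assumed non-empty) it has a looped vertex $v$, so the constant map $G\to\{v\}$ is always a homomorphism; thus $\Hom(G,H)\neq\emptyset$ and $\bHom(G,H)$ has at least one component. Hence $\bHom(G,H)$ is disconnected \emph{if and only if} there exist $f,g\in\Hom(G,H)$ lying in distinct components of $\bHom(G,H)$, and such a pair $(f,g)$ is the certificate. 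It is polynomial in $|G|$: each of $f,g$ is a map $V(G)\to V(H)$ and $|V(H)|$ is a fixed constant, so $(f,g)$ takes $O(|V(G)|\log|V(H)|)$ bits.

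The verifier, given $G$ and a candidate $(f,g)$, does two things: (i) confirm that $f$ and $g$ are homomorphisms $G\to H$ by checking the edge condition on each edge of $G$, and (ii) confirm that there is no path from $f$ to $g$ in $\bHom(G,H)$, i.e.\ that $(G,f,g)$ is a \textbf{no}-instance of $\Recon(H)$. Step (i) is clearly polynomial. For step (ii), the hypothesis that $H$ is triangle-free reflexive lets us invoke the theorem of \cite{LNS21} that $\Recon(H)$ is decidable in polynomial time (for reflexive instances, which is the regime we work in); since $\Poly$ is closed under complementation, recognising a \textbf{no}-instance of $\Recon(H)$ is also polynomial. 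The verifier accepts exactly when (i) and (ii) both hold, so by the equivalence above it accepts some candidate precisely when $\bHom(G,H)$ is disconnected. This shows the set of negative instances of $\Mix(H)$ is in $\NP$, which is the assertion $\Mix(H)\in\coNP$.

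I do not expect a genuine obstacle: all the real work is hidden inside the recolouring algorithm imported from \cite{LNS21}, and everything else is a routine certificate-checking argument. The only points that need a little care are the degenerate cases (the empty graph $H$, or $\Hom(G,H)$ consisting of a single map) and making sure that ``$\bHom(G,H)$ is disconnected'' is faithfully witnessed by a separating pair $f,g$ — both of which are dispatched by the non-emptiness observation in the first paragraph.
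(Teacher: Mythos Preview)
Your argument is correct and is exactly the one the paper has in mind: the paper does not prove Fact~\ref{fact:coNP} in place but derives it from the sentence ``Once $\Recon(H)$ is known to be polynomial time solvable for a given structure $H$, the problem $\Mix(H)$ drops into the complexity class $\coNP$, as we now have a polynomial certificate of the disconnectedness of $\bHom(G,H)$,'' together with the cited result from \cite{LNS21} that $\Recon(H)$ is in $\Poly$ for triangle-free reflexive $H$. Your write-up simply spells out this certificate argument in full, including the degenerate cases.
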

   
  Clearly if $H$ is disconnected then $\Hom(G,H)$ is disconnected as soon as $G$ contains an edge, so we always assume that $H$ is connected.  $\Mix(H)$ can then be solved for an instance $G$ component-wise, so we will also assume that $G$ is connected.  If $H$ is a reflexive tree, then it is dismantlable,  in which case $\Mix(H)$ is trivial. In this paper, we prove completeness when $H$ has a nontrivial cycle, with the following theorem.  
 
  \begin{theorem}\label{thm:full}
    For any triangle-free reflexive non-tree $H$, the problem $\Mix(H)$ is $\coNP$-complete. 
  \end{theorem}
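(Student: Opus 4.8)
Membership $\Mix(H)\in\coNP$ is exactly Fact~\ref{fact:coNP}, so everything comes down to proving $\coNP$-hardness; equivalently, I must show that the complement problem ``given a connected graph $G$, is $\bHom(G,H)$ \emph{disconnected}?'' is $\NP$-hard. The plan is to factor this through the topological problem $\NF(\mathbf{H})$, where $\mathbf{H}$ denotes the clique complex of $H$. Since $H$ is triangle-free and reflexive, $\mathbf{H}$ is a $1$-dimensional complex (the loops contribute only the $0$-simplices, and there are no $2$-simplices), a graph homomorphism $G\to H$ is literally the same thing as a simplicial map $\mathbf{G}\to\mathbf{H}$ of clique complexes, and because $H$ is connected and not a tree its first homology $H_1(\mathbf{H})\cong\mathbb{Z}^{\beta_1}$ is non-trivial and free, so $\mathbf{H}$ falls under the hypothesis of the completeness statement for $\NF$ announced in the abstract.

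The first, and I expect hardest, step is to show that the component structure of $\bHom(G,H)$ is detected by a homological invariant of the homomorphism. Building on the reconfiguration analysis of \cite{LNS21} (modelled on Wrochna~\cite{Wroch15}), I would argue that reconfiguration along a single edge of $\bHom(G,H)$ is a discrete homotopy and hence does not change the winding numbers of a homomorphism about a fixed cycle basis of $\mathbf{H}$, while conversely a homomorphism all of whose winding numbers vanish can be unwound to a constant map \emph{within graph homomorphisms}. Since $H$ is reflexive it admits constant (hence flat) homomorphisms, so $\bHom(G,H)$ is disconnected exactly when $G$ admits a homomorphism to $H$ that is not flat, i.e.\ that carries some $1$-cycle of $\mathbf{G}$ to a homologically non-trivial cycle of $\mathbf{H}$. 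This is precisely the statement that $G$ is a yes-instance of the complement of $\Mix(H)$ iff its clique complex is a yes-instance of $\NF(\mathbf{H})$, yielding a polynomial reduction in both directions (using, for the hard direction, that barycentric subdivision does not change homology, so one may take the $\NF$-instance to be a flag complex, i.e.\ the clique complex of a graph).

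It then remains to prove that $\NF(\mathbf{H})$ is $\NP$-complete whenever $H_1(\mathbf{H})$ is non-trivial and free. Membership in $\NP$ is routine: guess a simplicial map $\phi\colon\mathbf{G}\to\mathbf{H}$ together with a $1$-cycle $z$ of $\mathbf{G}$ and verify that $\phi_*[z]\ne0$ in $H_1(\mathbf{H})$; freeness means $[z]$ is an integer vector of winding numbers about a fixed cycle basis, so this is a polynomial linear-algebra test. For hardness I would reduce from a standard $\NP$-complete problem — say a not-all-equal or monotone version of $3$-SAT, or an independent-set/constraint-graph problem — fixing a cycle $\gamma$ of $\mathbf{H}$ realising a non-zero homology class and building flag gadget complexes, glued together into a single complex $\mathbf{G}$, whose simplicial maps to $\mathbf{H}$ are constrained to send a designated gadget loop either to a cycle homologous to $\gamma$ or to a null-homologous cycle; the two options encode the truth value of a Boolean variable, and the additivity of winding numbers (valid precisely because $H_1(\mathbf{H})$ is free) lets the variable and clause gadgets be assembled so that a satisfying assignment corresponds exactly to a global choice of non-flat simplicial map.

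The main obstacles are the two topological equivalences of the second paragraph — that a null-homologous homomorphism can genuinely be reconfigured to a constant within \emph{graph} homomorphisms (a dismantling/retraction or discrete-homotopy argument, delicate since $H$ itself need not be dismantlable), and the care needed to make the homology-level definition of $\NF(\mathbf{H})$ match the reconfiguration invariant exactly — together with the gadget construction for $\NF(\mathbf{H})$. In the latter, the difficulty is that the maps are simplicial rather than continuous, so one must rigidly pin down which homology class each gadget is permitted to carry while keeping every gadget a clique complex; this rigidity, combined with the additivity afforded by freeness of $H_1(\mathbf{H})$, is what lets the local gadget constraints compose into a genuine Boolean computation, and getting the gadgets exactly right is the technical heart of the argument.
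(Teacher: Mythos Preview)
Your proposal is correct and follows essentially the same route as the paper: membership from Fact~\ref{fact:coNP}; the equivalence ``$\bHom(G,H)$ disconnected $\iff$ $G$ admits a non-flat map'' via the homotopy/homology invariance results of \cite{LNS21} (the paper's Corollary~\ref{cor:LNS} and Lemma~\ref{lem:homhom}, packaged as Lemma~\ref{lem:3toNF}); and then $\NP$-hardness of $\NF(H)$ by a winding-number gadget reduction. The paper reduces from $3$-colouring rather than SAT, but the encoding is exactly your picture: each variable/vertex $v$ carries a family of copies of a shortest non-trivial cycle $C$ of $H$, a ``sum gadget'' forces their homology classes to add up to $[C]$, and edge gadgets forbid equal choices.

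There is one technical point you underestimate, and it is where most of the paper's work actually goes. In the converse direction of the $\NF$ reduction you must show that if the source problem has no solution then \emph{every} $H$-colouring of the gadget graph $G^*$ is flat---not merely flat on the designated gadget loops. Gluing gadgets together creates many ``accidental'' cycles in $G^*$ that run through several gadgets, and nothing in the local gadget analysis controls their homology. The paper's solution is to build the gadgets so that a designated spanning subgraph $T^*\leq G^*$ can be coloured constantly, then to compute (via K\"unneth, excision, and Mayer--Vietoris, Section~\ref{sect:NTbasis}) an explicit $\NT$-basis of $H_1(G^*)$ consisting only of gadget cycles together with cycles lying entirely in $T^*$, and finally to cone off each basis cycle of $T^*$ with a new vertex (Construction~\ref{const:G'}, Lemma~\ref{lem:plug}), so that in the final instance the only homologically non-trivial cycles are gadget cycles. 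Your remark about barycentric subdivision and keeping gadgets flag is a side issue (the paper's $\NF(H)$ takes graph instances, so no conversion is needed); the genuine obstacle is this global control of the cycle space of $G^*$, and you should expect it to dominate the argument.
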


The proof depends on results from \cite{LNS21} and exploits the same connection to topology that is central to the algorithms there and in \cite{Wroch15}. This lends support to the notion that topology may factor into the trichotomy classification.  We expect the proofs and results of this paper should yield somewhat messier irreflexive versions depending on \cite{Wroch15} rather than on \cite{LNS21}, and digraph versions depending on \cite{LMS23}, which extends \cite{Wroch15} to digraphs.  
  
 Finally, we note here, that our main technical theorem, Theorem \ref{thm:NF}, implies that $\Mix(H)$ is $\coNP$-Hard for a much larger class of reflexive graphs than triangle-free non-trees,  but we do not have a corresponding proof that $\Recol(H)$ is in $\Poly$ unless $H$ is triangle-free, so we do not know that $\Mix(H)$ is in $\coNP$ for these graphs.   
 

 \section{Outline of Proof and of Main Construction}\label{sect:outline} 
 
  One of the essential features in many papers on recolouring and homomorphism reconfiguration is the simple fact that when mapping a cycle $B$ to another smaller cycle $C$ of girth at least $4$, (or $5$ if $B$ is not reflexive) the number of times that $B$ winds around $C$ is invariant under reconfiguration.   In \cite{Wroch15}, Wrochna, to great effect, stated this in terms of homotopy, and showed that the homotopy, appropriately defined, of cycles in $G$ when mapped to $H$, is invariant under reconfiguration.
 This idea was adapted to reflexive graphs in \cite{LNS21}, and digraphs in \cite{LMS23}. 
  In this paper we use the sightly coarser, but computationally simpler topological invariant of homology.  We define homology properly in Section \ref{sect:homology}, but for the purposes of this outline, one unfamiliar with the concept benefits by thinking of $H$ as a cycle, and the `homology' of another  cycle $C$ mapping to $H$ as being the number of times it winds around $H$; having trivial homology means it does not wind.   
  
  As $H$ is reflexive, there are always constant maps in $\Hom(G,H)$, and as $H$ is connected, it is easy to see that these are always in the same component of $\Hom(G,H)$. So to decide if $\Hom(G,H)$ is connected, one must decide if every $H$-colouring of $G$ reconfigures to a constant map; this follows the basic approach used in \cite{BM23}.   In \cite{LNS21} it was shown that an $H$-colouring of $G$ reconfigures to a constant map if and only if it has trivial homotopy (see Corollary \ref{cor:LNS} below), and trivial homotopy is implied by trivial homology (see Lemma \ref{lem:homhom}). So deciding if all $H$-colourings of $G$ have trivial homology is a sub-problem of $\Mix(H)$.  Calling an $H$-colouring of $G$ {\em flat} if it has trivial homology, we reduce our task  to showing $NP$-completeness of the problem $\NF(H)$ to deciding if there is a non-flat $H$-colouring of an instance $G$ (see Lemma \ref{lem:3toNF}).  
  
  To prove this, we will reduce $3$-colouring to $\NF(H)$ via a gadget construction. Given an instance $G$ of $3$-colouring, we construct an instance $\Gb$ of $\NF(H)$ such that $G$ has a $3$-colouring if and only if $\Gb$ has a non-flat homomorphism to $H$. Now, we give an overview of the construction.    
   
   The (categorical) product $A \times B$ of two graphs $A$ and $B$ is the graph with vertex set $V(A) \times V(B)$, in which $(a,b) \sim (a',b')$ if $a \sim a'$ in $A$ and $b \sim b'$ in $B$.   
   For a vertex $a$ of $A$, $a \times B$ is the subgraph induced on the vertices $\{ (a,b) \mid b \in B \}$. If $A$ has a loop, then this is an isomorphic copy of $B$.
    
   Where $P_\ell$ is the reflexive path on vertices ${0, \dots, \ell}$, it is well known, and easy to show, that the existence of a path $P_\ell$ from $\phi$ to $\psi$ in $\Hom(G,C)$ is equivalent to the existence of a homomorphism from $P_\ell \times G$ to $C$ that restricts on $0 \times G$ to $\phi$ and on $\ell \times G$ to $\psi$. For a cycle $C$, we refer to $P_\ell \times C$ as a {\em path-of-cycles}  and to each copy $i \times C$ of $C$ a {\em slice} of the path. 
   
   In Fact \ref{fact:preshom}, we observe the basic fact that under a homomorphism of a path-of-cycles to $H$, all slices have the same homology.  This property of paths-of-cycles is the main building block in our construction of $\Gb$.   Let $Z$ be a smallest cycle in $H$.  The graph $\Gb$ will contain a special copy $\sZ$ of $Z$, and for every vertex $v$ of $G$, three more copies $\A_0^v, \A_1^v$ and $\A_2^v$ of $Z$. 
   
   Using a gadget $S_3$ made of paths-of-cycles we will ensure that under a homomorphism $\phi:\Gb \to H$, if $\sZ$ has nontrivial homology, then exactly one of the $\A_i^v$ for each $v$ have the same homology as $\sZ$, and the other two have trivial homology.  For every edge $uv$ of $G$ we use another path-of-cycles gadget $S_2$ to ensure that not both  $\A_i^v$ and $\A_i^u$ get nontrivial homology. From this it will not be hard to show that if there is a non-flat homomorphism $\phi: \Gb \to H$; one such that $\sZ$ has nontrivial homology, then we get a $3$-colouring of $G$ by mapping $v$ to the $i$ such that $\A_i^v$ has nontrivial homology.    
   
    
   The construction as described above is not so difficult. The gadgets are constructed in Section \ref{sect:windsum} and the rest of the construction is done with Construction \ref{const:G*} of Section \ref{sect:stage1}. However, we also have to show that if there is no $3$-colouring of $G$, then all homomorphism of $\Gb$ to $H$ are flat.  It is not difficult to show that if there is no $3$-colouring of $G$, then any homomorphism of $\Gb$ to $H$ has trivial homology on the slices of our path-of-cycles gadgets. This does not yet ensure that all $H$-colourings of $\Gb$ are flat. There may be a lot of other `non-gadget' cycles in $\Gb$ that also must have trivial homology.  
   
   To deal with this we define a subgraph $T_*$ of $\Gb$ that contains a basis, in the cycle space, of all of these non gadget-cycles.  For each cycle in $T_*$, we  `plug' it, by triangulating it to a new central vertex.  So that this does not break the necessary properties of $\Gb$, we must build $\Gb$ and the gadgets so that we know the possible colourings on the vertices of $T_*$.  Because of this, our gadget construction mentions a subgraph $T$ the copies of which will become part of $T_*$ in $\Gb$.  The proof that the cycles in $T_*$ that we plug in this way are all the cycles that have to be plugged takes considerable work, using a lot of topological machinery.  

We try, as much as possible, to remove these topological proofs from the main flow of the proof.  They take considerable patience for a reader unfamiliar with them. However, we also found that they provide a nice intuitive motivation for such basic results of a first algebraic topology class as the K\"{u}nneth formula, excision, and Mayer-Vietoris sequences.  
   
In Section \ref{sect:homology} we recall the basic definitions of homology that are necessary throughout the paper. In  Section \ref{sect:non-flat} we reduce the mixing problem to the non-flat problem and state our main theorem about the non-flat problem.  In Sections \ref{sect:windsum} and \ref{sect:stage1} we give our construction and prove the main theorem modulo a result about the basis of its cycle space that we prove in Section \ref{sect:NTbasis}.

  \section{The homology of a graph}\label{sect:homology} 
  
   We recall the basic definitions of simplicial homology with coefficients in $\ZZ$. This is basically Section 2.1 of Hatcher \cite{Hatcher} applied to the clique-complex of a graph $G$. 
   
   In the {\em clique-complex $\DG$} of a graph $G$, an {\em $n$-simplex} is an ordered $(n+1)$-clique of $G$. As $G$ is reflexive, we consider multisets $[v,v,v]$, and $[u,v,u]$ when $u \sim v$, to be $3$-cliques. 

 The {\em chain group} $C_n(\DG)$ of $G$ is the group generated as a $\ZZ$-module over the set of $n$-simplices of $\DG$; $C_{-1}(\DG)$ is taken to be $0$. The elements of the groups $C_n(\DG)$ are $n$-chains-- they can be view as multisets of the of $n$-simplices of $\DG$.
   Between these groups we have the boundary maps $\delta_n :C_n(\DG) \to C_{n-1}(\DG)$ defined on $n$-simplices by
   \[ \delta_n([v_0,v_1, \dots, v_n]) = \sum_{i=1}^n (-1)^{i}[v_0,v_1, \dots, v_{i-1}, v_{i+1}, \dots, v_n], \]
   and extended to the whole group additively.  For example, assume that $G$ has two $3$-cliques, $\{a,b,c\}$ and $\{a,d,b\}$.  The sum, $[a,b,c] + [a,d,b]$ say, is a $2$-chain.  Its boundary is 
      \begin{eqnarray*}
       \delta( [a,b,c] + [a,d,b] )  & = &  [b,c] - [a,c] + [a,b]  + [d,b] - [a,b] + [a,d] \\
                                    & = &  [a,d] + [d,b] + [b,c] - [a,c].\\
     \end{eqnarray*}

    Homology groups are defined using the boundary maps as follows.   
    The co-kernel  
     $B_{i}(\DG):= \delta_{i+1}(C_{i+1}(\DG)) \leq C_i(\DG)$ 
    of $\delta_{i+1}$ is the {\em $i^{th}$ boundary group}; its elements are $i$-boundaries.
    The kernel $Z_i(\DG) = \ker(\delta_{i}) \leq C_{i}(\DG)$ is the
    {\em $i^{th}$ cycle group}; its elements are {\em $i$-cycles}.
    It is easy to check that $\delta_i \circ \delta_{i+1} = 0$ and so $B_i(\DG) \leq Z_i(\DG)$.
    The {\em $i^{th}$ homology group} is $H_i(\DG) = Z_i(\DG)/B_i(\DG)$; its elements are classes of {\em homologically equivalent} cycles, though we often refer to these classes simply as cycles. 
    
    We start with some standard easy calculations.
    \begin{example}
      For any connected graph $G$, $H_0(\DG) = \ZZ$.  
      Indeed, $C_0(\DG) = \angles{ [v] \mid v \in V(G) }$ and as $\delta_0$ is the $0$ map, 
      $Z_0(\DG) = \ker{\delta_0} = C_0(\DG)$.  On the other hand, $C_1(\DG) = \angles{ [u,v] \mid uv \in E(G) }$, so $B_0(\DG) = \delta_1(C_1(\DG)) = \angles{ [u] - [v] \mid uv \in E(G) }$.
      Thus $H_0(\DG) = Z_0(\DG)/B_0(\DG) = \ZZ$.
    \exqed
    \end{example}

    \begin{example}\label{example:basprop}
     The following are all true for any triangle-free reflexive $G$. 
    \begin{enumerate}
        \item $C_2(\DG)$ is the free group generated by the set 
      \[ \{ [v,v,v] \mid v \in V(G)\} \cup \{ [u,u,v], [u,v,u], [v,u,u] \mid uv \in E(G)\} \]
        \item As $\delta([v,v,v]) = [v,v]$ and  $\delta([u,v,u]) = [v,u] - [u,u] + [u,v]$,  one finds that $B_1(\DG) = \delta_2(C_2)$ is free generated by  \[ \{ [v,v] \mid v \in V(G) \} \cup \{ [u,v] + [v,u] \mid uv \in E(G) \}.  \]
        It follows that $[v,u] = -[u,v]$ in $H_1(\DG)$ for every edge $uv$ of $G$. 
        \item $Z_1(\DG)$ is free generated by cycles (including loops and $2$-cycles) in $G$.
        \item $H_1(\DG) = Z_1(\DG)/B_1(\DG)$ is free generated by the undirected edge-sets of cycles (not-including loops and $2$-cycles) in $G$.
        \item $H_n(\DG)$ is {\em trivial}; that is, has only the $0$ element; for $n \geq 2$.  
    \end{enumerate}
    \exqed
    \end{example}
    
    In light of (3), we will often talk of a cycle $(x_0, x_1,  \dots, x_\ell, x_0)$ of $G$ as being in $Z_1(\DG)$. This really means the $1$-cycle
      \[ [x_0,x_1] + [x_1,x_2] + \dots + [x_\ell, x_0]. \] 
    
    \begin{example}
    
    If $G$ is not triangle free, then a triangle such as $(u, v, w, u)$, which lives
    in $Z_1(\DG)$ as $\sigma = [u,v] + [v,w] + [w,u]$ is a boundary in $B_1(\DG)$: 
         \[ \delta_2([u,v,w]) = [v,w] - [u,w] + [u,v] = [u,v] + [v,w] + [w,u], \]
    so $[\sigma] = 0$ in $H_1(\DG)$.  Thus we still have that $H_1(\DG)$ is generated by undirected edges-sets of cycles in $G$, but it may no longer be free, and there can be more complicated boundaries as well. The groups $H_n$ for $n \geq 2$ are also not generally trivial, if $G$ is not triangle-free.
    
    \exqed
    \end{example}
    
    The group $Z_1(\DG)$ of $1$-cycles is what is often called the integral cycle space of $G$; as opposed to the cycle space, which we get with the same development using $\ZZ_2$ in place of $\ZZ$. 
    A cycle $C$ in $G$ is {\em trivial} if $[C] = 0$ in $H_1(\DG)$, where $0$ represents the class containing the empty cycle. If $H_1(\DG) = \{ 0 \}$ then we say $G$ has {\em trivial homology}. 

    
    The following is the basic statement that all slices of a path of cycle get the same homology under any homomorphism. 
    \begin{fact}\label{fact:preshom} 
       Let $\phi: P_\ell \times C \to H$ be a homomorphism.  For all $j,k \in \{0,1, \dots, \ell\}$ we  have $[\phi(j \times C)] = [\phi(k \times C)]$ in $H_1(\DH)$.
    \end{fact}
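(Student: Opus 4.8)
The plan is to reduce to consecutive slices and then, for each edge of $C$, exhibit an explicit $2$-chain in $\DH$ that absorbs the difference between the two neighbouring slices; the underlying picture is that the slices $i\times C$ and $(i+1)\times C$ cobound a triangulated cylinder in $P_\ell\times C$, which $\phi$ carries into $\DH$. Since equality in $H_1(\DH)$ is transitive, it suffices to prove $[\phi(i\times C)]=[\phi((i+1)\times C)]$ for each $i\in\{0,\dots,\ell-1\}$. First I would fix notation: writing $C=(x_0,x_1,\dots,x_{m-1},x_0)$ with indices modulo $m$, the slice $\phi(i\times C)$ is (under the identification of the reflexive cycle $i\times C$ with $C$) the $1$-cycle $\sigma_i=\sum_{j\in\ZZ_m}[\phi(i,x_j),\phi(i,x_{j+1})]\in Z_1(\DH)$.

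Next, fix $i$ and $j$ and abbreviate $a=\phi(i,x_j)$, $b=\phi(i+1,x_j)$, $c=\phi(i+1,x_{j+1})$, $d=\phi(i,x_{j+1})$. Since $P_\ell$ and $C$ are reflexive, the four product vertices $(i,x_j),(i+1,x_j),(i+1,x_{j+1}),(i,x_{j+1})$ span a $4$-cycle in $P_\ell\times C$ together with the diagonal edge $(i,x_j)(i+1,x_{j+1})$; applying the homomorphism $\phi$ then gives $a\sim b$, $b\sim c$, $c\sim d$, $d\sim a$ and $a\sim c$ in $H$ (with $\sim$ allowing equality, via loops). Hence $[a,b,c]$ and $[a,c,d]$ are $2$-simplices of $\DH$ --- possibly degenerate multiset simplices such as $[u,v,u]$, which the definitions permit --- and a direct computation gives
\[\delta_2\big([a,b,c]+[a,c,d]\big)=[a,b]+[b,c]+[c,d]-[a,d].\]
As $[a,d]+[d,a]\in B_1(\DH)$ by Example~\ref{example:basprop}(2), the closed $4$-walk $w^{(i)}_j:=[a,b]+[b,c]+[c,d]+[d,a]$ therefore lies in $B_1(\DH)$.

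Finally I would sum $w^{(i)}_j$ over $j\in\ZZ_m$ and collect terms. The edges of type $[b,c]$ assemble to $\sigma_{i+1}$; the edges of type $[d,a]$, after the re-indexing $j\mapsto j+1$ and the relation $[u,v]=-[v,u]$ in $H_1(\DH)$ (Example~\ref{example:basprop}(2)), contribute $-\sigma_i$; and the two ``vertical'' families $[a,b]$ and $[c,d]$ cancel one another modulo $B_1(\DH)$ after that same re-indexing. Thus $\sum_{j\in\ZZ_m}w^{(i)}_j\equiv\sigma_{i+1}-\sigma_i\pmod{B_1(\DH)}$; since the left-hand side is a sum of boundaries, $\sigma_{i+1}-\sigma_i\in B_1(\DH)$, i.e.\ $[\phi(i\times C)]=[\phi((i+1)\times C)]$, and chaining these equalities over $i$ completes the proof.

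I do not expect a genuine obstacle here: the content is just that homomorphisms send homologous cycles to homologous cycles, carried out by hand because the paper has not introduced induced chain maps. The two points that need a little care are the degenerate cases, where some of $a,b,c,d$ coincide and the ``square'' collapses to a triangle, an edge, or a point --- absorbed uniformly by the multiset-simplex convention --- and the sign and re-indexing bookkeeping in the telescoping sum, which rests solely on the identity $[u,v]=-[v,u]$ in $H_1(\DH)$ from Example~\ref{example:basprop}(2).
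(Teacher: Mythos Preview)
Your proof is correct and follows essentially the same approach as the paper: reduce to consecutive slices, triangulate the square between them, and telescope the boundaries. The only cosmetic differences are that the paper works with the $2$-chain in the domain $P_\ell\times C$ and then pushes forward by $\phi$ (so the vertical edges cancel on the nose in $C_1$ rather than modulo $B_1$), and it uses the other diagonal of each square; one minor citation quibble is that Example~\ref{example:basprop}(2) is stated for triangle-free graphs while Fact~\ref{fact:preshom} is not, but the identity $[u,v]+[v,u]\in B_1(\DH)$ you use holds in general.
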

    \begin{proof}
      Clearly it is enough to prove this for $j=0$ and $k=1$.
      Observe that the boundary $\delta_2(c_i)$ of the $2$-chain 
       \[ c_i:= [(0,i), (0,i+1), (1,i) ] + [ (1,i), (0,i+1), (1,i+1)]\]
       is the square 
       \[[(0,i), (0,i+1)] - [(0,i), (1,i)] - [(1,i),(1,i+1)] + [(0,i+1),(1,i+1)]. \]
       So the boundary of the $2$-chain $\sum_{i = 0}^{\ell} c_i$ is 
       $(0 \times C) - (1 \times C)$, and so its image $\phi(0 \times C) - \phi(1 \times C)$
       is also a boundary. 
       Thus $[\phi(0 \times C)] = [\phi(1 \times C)]$ in $H_1(\DH)$, as needed. 
    \end{proof}

\section{Reduction to Non-flat $H$-colouring}\label{sect:non-flat}

  In this section we reduce our main result to a problem called {\em non-flat $H$-colouring}.
  
  \begin{definition}\label{def:flat}
    A homomorphism $\phi: G \to H$ is {\em flat} if for all cycles $C$ in $G$, $[\phi(C)] = 0$ in $H_1(\DH)$. Equivalently $\phi$ is flat if $[\phi(C)] = 0$ for all cycles $C$ in a basis of $H_1(\DG)$. 
   \end{definition}

 Consider the following problem. 
\begin{problem*}[$\NF(H)$] \label{probflat}
\instance{A graph $G$.}
\decision{Is there a non-flat homomorphism of $G$ to $H$?} 
\end{problem*}
  
 We will prove the following.
 
 \begin{theorem}\label{thm:NF}
   Let $H$ be a reflexive graph such that $H_1(\DH)$ is free and nontrivial.
   The problem $\NF(H)$ is $\NP$-hard. 
 \end{theorem}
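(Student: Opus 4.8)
The plan is to give a polynomial-time reduction from graph $3$-colouring to $\NF(H)$; it is the reduction sketched in Section~\ref{sect:outline}, and the three stages of its verification are Sections~\ref{sect:windsum}--\ref{sect:finalproof}. Since $H_1(\DH)$ is non-trivial and, by Example~\ref{example:basprop}, is generated by edge-sets of cycles of $H$, we fix a cycle $C$ of $H$ (a shortest one that is non-trivial) with $[C]\neq 0$ in $H_1(\DH)$. As $H_1(\DH)$ is \emph{free}, the class $[C]$ has infinite order, so $k[C]\neq 0$ for every non-zero $k\in\ZZ$ and $\langle[C]\rangle\cong\ZZ$; this torsion-freeness is exactly what makes the later arithmetic-of-homology arguments work (``$m x=0\Rightarrow x=0$'', ``a non-zero multiple of $[C]$ is not a sum of classes that are all $0$''). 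The one structural tool used throughout is the path-of-cycles $P_\ell\times C$ together with Fact~\ref{fact:preshom}: along any homomorphism $P_\ell\times C\to H$ all slices $i\times C$ receive the same class of $H_1(\DH)$, so gluing and branching such objects lets one transport and add up a homology class among distinguished copies of $C$.

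From paths-of-cycles -- glued together, with certain pieces triangulated -- I build two gadgets, each carrying ``terminal'' copies of $C$. The gadget $S_3$ has terminals $Z,A_0,A_1,A_2$ and, for every homomorphism $\phi\colon S_3\to H$: each of $[\phi(A_0)],[\phi(A_1)],[\phi(A_2)]$ is $0$ or equal to $[\phi(Z)]$; exactly one of them is non-zero if $[\phi(Z)]\neq 0$, and all three are $0$ if $[\phi(Z)]=0$; and conversely any choice of $[\phi(Z)]\in\langle[C]\rangle$ together with a choice of the unique non-zero $A_i$ extends to a homomorphism $S_3\to H$. The gadget $S_2$ has terminals $A,A'$ and forces at least one of $[\phi(A)],[\phi(A')]$ to vanish, while any assignment with at least one of them $0$ extends. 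Both properties follow from Fact~\ref{fact:preshom} (which already pins the classes down to $0$ or $[\phi(Z)]$ once the gadget is the right configuration of paths-of-cycles) together with the freeness of $H_1(\DH)$; the explicit constructions live in Section~\ref{sect:windsum}. The gadgets are also built to contain a prescribed subgraph $T$ on which every homomorphism to $H$ is constant, so that the copies of $T$ can later be coned off inside $H$.

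Given a $3$-colouring instance $G$, the graph $G^*$ (Section~\ref{sect:stage1}) has a single copy $Z$ of $C$; for each $v\in V(G)$ three copies $A_0^v,A_1^v,A_2^v$ of $C$ and a copy of $S_3$ with terminals $Z,A_0^v,A_1^v,A_2^v$; and for each edge $uv\in E(G)$ and each $i\in\{0,1,2\}$ a copy of $S_2$ with terminals $A_i^u,A_i^v$. If $c\colon V(G)\to\{0,1,2\}$ is proper, send $Z$ and each $A_{c(v)}^v$ ``once around'' $C$ and the remaining $A_i^v$ to constants, then extend over each $S_3$ (converse property, with the choice $i=c(v)$) and over each $S_2$ (converse property, applicable since $c(u)\neq c(v)$ makes at most one of $A_i^u,A_i^v$ non-constant); the result is a $\phi\colon G^*\to H$ with $[\phi(Z)]=[C]\neq 0$, hence non-flat. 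Conversely, if $\phi\colon G^*\to H$ is non-flat then -- granting the statement below -- $[\phi(Z)]\neq 0$, so each $S_3$ yields a unique $i(v)$ with $[\phi(A_{i(v)}^v)]\neq 0$, and $c(v):=i(v)$ is a proper colouring: $c(u)=c(v)=i$ would make both $A_i^u,A_i^v$ non-trivial under $\phi$, contradicting the $S_2$ on that pair. Thus the whole reduction reduces to one statement: \textbf{every non-flat $\phi\colon G^*\to H$ has $[\phi(Z)]\neq 0$.}

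Proving this is the main obstacle (Sections~\ref{sect:NTbasis} and~\ref{sect:finalproof}). The trouble is that $G^*$ contains many ``parasitic'' cycles besides the gadget slices, and a priori a non-flat $\phi$ could owe its non-flatness to one of them rather than to $Z$. The plan is: first, to isolate a subgraph $T^*$ of $G^*$ (built from the copies of the designated $T$) so that $H_1(\DG^*)$ is generated by the gadget slices together with cycles supported on $T^*$ -- that is, to compute a sufficiently controlled $\NT$-basis of $G^*$; this demands a careful analysis of how the paths-of-cycles are amalgamated and is the technical core. Second, each $T^*$-cycle $D$ of that basis is ``plugged'': one adds a vertex adjacent to all of $V(D)$, so that $D$ becomes a boundary in the new $G^*$ (it is $\delta_2$ of the obvious $2$-chain of the new triangles, exactly as in the proof of Fact~\ref{fact:preshom}). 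This is harmless: every homomorphism is constant on $T^*$, so the new vertex always has a legal image in the reflexive $H$, and $\phi$ sends each new triangle to a $3$-clique of $H$, so $[\phi(D)]=0$ automatically. After plugging, every cycle of $G^*$ is homologous to a $\ZZ$-combination of gadget slices; and along every gadget Fact~\ref{fact:preshom} makes the image-class of each slice a function of that of the incident terminal, so $[\phi(Z)]=0$ forces $[\phi(\cdot)]=0$ on every slice and on every plugged cycle, i.e. $\phi$ is flat. The contrapositive is the displayed statement; one also checks that the canonical homomorphisms of the easy direction survive the plugging and that $G^*$ has polynomial size, so the reduction is complete. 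I expect the $\NT$-basis computation -- showing that the gadget slices and the $T^*$-cycles really generate all of $H_1(\DG^*)$ -- to be by far the most delicate part.
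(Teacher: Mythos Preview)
Your plan is the paper's: reduce $3$-colouring to $\NF(H)$ via path-of-cycle gadgets, assemble a preliminary graph, compute an $\NT$-basis showing that the non-gadget cycles lie in a subgraph $T^*$, then plug those cycles to obtain $G^*$; you also correctly flag the $\NT$-basis computation (Section~\ref{sect:NTbasis}) as the technical core.

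Two points are misstated, and the first is a genuine gap. The paper's $S_s$ (Lemma~\ref{gad:sum}) delivers only the \emph{sum} relation $[\Gamma(Z)]=\sum_i[\Gamma(A_i)]$; it does not force each $[\phi(A_i)]\in\{0,[\phi(Z)]\}$ with exactly one non-zero, and your description of $G^*$ omits the mechanism the paper uses to get that---the extra copies $S^v_{i,j}$ of $S_2$ in step~(3) of Construction~\ref{const:G*}, glued between each pair $A_i^v,A_j^v$ with an orientation flip so that two of the $[\phi(A_i^v)]$ cannot have opposite sign. Claim~\ref{claim:exactly1} combines the sum relation, minimum-girth primitivity, and these flip gadgets; without step~(3) the backward direction breaks (e.g.\ $[\phi(A_0^v)]=[C]$, $[\phi(A_1^v)]=-[C]$, $[\phi(A_2^v)]=[\phi(Z)]$ satisfies the bare sum constraint), and Lemma~\ref{lem:partconv}, which you need to pass from ``some gadget-cycle non-trivial'' to ``$[\phi(Z)]\neq 0$'', also rests on Claim~\ref{claim:exactly1}. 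Likewise your $S_2$ does not by itself force ``at least one terminal vanishes''; that only follows once the $S_3$-plus-flip side has pinned both terminals to $\{0,[\phi(Z)]\}$. Second, it is not true that every homomorphism is constant on $T^*$; the paper only arranges (Lemma~\ref{gad:sum}(3) and the ``Moreover'' of Lemma~\ref{lem:reduct1}) that the \emph{particular} $\phi$ built from a $3$-colouring may be taken constant~$0$ on $T^*$, which is exactly what is needed to extend it over the plug vertices $v_C$. For an arbitrary $\phi$ defined on the plugged $G^*$, the vanishing of $[\phi(D)]$ for a plugged cycle $D$ is automatic because $D$ is now a boundary in $\mathbf{G^*}$, not because $\phi$ is constant on it.
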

  
 The proof of this takes up most of the paper, but for the rest of this section, we show that it implies Theorem \ref{thm:full}. We need a result from \cite{LNS21} where homotopy classes of cycles in a graph are defined.

 \begin{definition}\label{def:moves}
 Let $\Pi(H;x_0)$ be the graph whose vertices are cycles in $H$ starting at $x_0$, and in which a cycle $Y$ is adjacent to a cycle $X = (x_0, \dots, x_\ell,x_0)$ if either of the following are true:
  \begin{enumerate}
  \item[\mylabel{P1}{(P1)}]
    $Y = (x_0,x_1, \dots, x_{i-1},x_i,x_i,x_{i+1},\dots, x_\ell,x_0)$ for any $i$, or
  \item[\mylabel{P2}{(P2)}]
    $Y = (x_0,x_1, \dots, x_{i-1},x'_i,x_{i+1}, \dots, x_\ell,x_0)$ for some $i \notin \{0,\ell\}$, where $x_{i-1}=x_{i+1}$.
  \end{enumerate}
  Let $\Pi(H)$ be the graph we get from the disjoint union, over all $x_0 \in V(H)$, of $\Pi(H;x_0)$ 
    by adding an edge between walks $y$ and $x = (x_0,x_1, \dots,x_{\ell-1},x_0)$ if 
      \begin{enumerate}
          \item[\mylabel{P3}{(P3)}] $y = (x_1, \dots, x_{\ell-1})$ and $x_1 = x_{\ell-1}.$
      \end{enumerate}
  \end{definition}
  
  Observe that if $H$ is connected, then all the constant cycles, those in which all vertices are the same, are in the same component of $\Pi(H)$. A cycle is {\em contractible} if it is in the same component of $\Pi(H)$ as the constant cycles. 
  
   The following is from \cite{LNS21}.  
   \begin{corollary}\label{cor:LNS}
    A homomorphism $\phi$ in $\bHom(G,H)$ is in the constant component of $\bHom(G,H)$ if 
    for every cycle $C$ in $G$, $\phi(C)$ is contractible in $\Pi(H)$. 
   \end{corollary}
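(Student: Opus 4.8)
Since the statement is recorded in \cite{LNS21} as a consequence of the reconfiguration characterisation proved there, the plan is to recall why that characterisation yields it and to sketch the argument underneath. The picture is that the components of $\bHom(G,H)$ form a combinatorial model for the discrete homotopy classes of maps from the clique complex $\DG$ to $\DH$, and that $\Pi(H)$ is the corresponding model for based loops in $\DH$: its components are a combinatorial $\pi_1(\DH)$, and the constant component is the trivial class. In these terms the hypothesis ``$\phi(C)$ is contractible for every cycle $C$ of $G$'' says exactly that $\phi$ induces the trivial map on fundamental groups; and since $\DH$ has the homotopy type of the graph $H$ and hence is a $K(\pi,1)$, a map into it is null-homotopic as soon as it kills $\pi_1$. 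So $\phi$ is null-homotopic, i.e.\ it lies in the constant component of $\bHom(G,H)$. All the work is in making the dictionary between reconfiguration and homotopy rigorous, which is the content of \cite{LNS21} (following \cite{Wroch15}).

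To organise the underlying argument I would fix a spanning tree $T$ of $G$ rooted at a vertex $r$ and, for each non-tree edge $e=uv$, take the fundamental cycle $C_e$: the $T$-path from $r$ to $u$, the edge $e$, and the $T$-path from $v$ back to $r$. The hypothesis applied to each $C_e$ says that the closed walk $\phi(C_e)$, based at $\phi(r)$, is joined in $\Pi(H)$ to the constant walk at $\phi(r)$ by a finite sequence of the moves (P1), (P2), (P3). The plan is to lift such sequences to a walk in $\bHom(G,H)$ from $\phi$ to a constant map: a (P1) step is a vertex staying put, a (P2) step is a slide of one coordinate of $G$ across a length-two walk in $H$ --- legal as an edge of $\bHom(G,H)$ on the edges of $G$ incident to the moved vertex precisely because $H$ is reflexive --- and a (P3) step is a re-basing realised by a slide along $T$ (for triangle-free reflexive $H$ this is just moving a leaf onto its parent in a single step, since $N[x]$ is then a star). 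Since the $\Pi(H)$-class of the image of a cycle is itself invariant under reconfiguration (the homotopy analogue of Fact~\ref{fact:preshom}), the fundamental cycles not yet handled remain contractible throughout; collapsing the pendant trees of $G$ by the same leaf slides then completes the passage to a constant map.

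The step I expect to be the real obstacle is exactly the phrase ``lift such sequences'' done across all of $G$ at once. Distinct fundamental cycles share tree edges and vertices, so one cannot contract the image of a single $C_e$ in isolation: a $\Pi(H)$-move that is harmless on one closed walk may violate some other edge of $G$ at the moved vertex, or undo work done on a previously handled cycle through a shared $T$-path. Arranging the global bookkeeping --- a suitable order of contraction, canonical routings for the tree slides, and the verification that every intermediate configuration is a genuine homomorphism --- is the technical heart, and it is precisely what is established in \cite{LNS21}; I would invoke that result rather than redo the bookkeeping.
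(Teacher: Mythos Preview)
Your proposal is correct and matches the paper's treatment: the paper does not prove Corollary~\ref{cor:LNS} at all but simply imports it from \cite{LNS21}, and you likewise recognise this and defer the technical bookkeeping there. Your additional sketch of the underlying mechanism---the $K(\pi,1)$ heuristic, the fundamental-cycle decomposition via a spanning tree, and the honest identification of the global coordination of $\Pi(H)$-moves across shared tree paths as the real difficulty---is sound and goes beyond what the paper itself offers, but it is consistent with the argument structure of \cite{LNS21} (and \cite{Wroch15}) that the corollary rests on.
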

     
    Now observe the following analog of a well known idea of topology.

    \begin{lemma}\label{lem:homhom}
        Let $H$ be a triangle-free reflexive graph. 
        If $[\phi(C)]$ is trivial in $H_1(\DH)$, then $\phi(C)$ is contractible in $\Pi(H)$. 
    \end{lemma}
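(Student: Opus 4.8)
The plan is to show that if a cycle $D = \phi(C)$ in a triangle-free reflexive graph $H$ is null-homologous, then it is already contractible in $\Pi(H)$; by Definition \ref{def:flat} and Corollary \ref{cor:LNS} this is exactly what is needed. The key point is that in the triangle-free case the algebraic witness of $[D]=0$ in $H_1(\DH)$ is very rigid: by Example \ref{example:basprop}(1)--(4), $H_1(\DH)$ is \emph{free} on the edge-sets of the (genuine, length $\geq 3$) cycles of $H$, and $B_1(\DH)$ is generated by the loops $[v,v]$ and the two-cycles $[u,v]+[v,u]$. So $[D]=0$ means that, writing $D$ as an element $[x_0,x_1]+\dots+[x_\ell,x_0]$ of $Z_1(\DH)$, after deleting loop-steps (steps with $x_i=x_{i+1}$) and cancelling consecutive back-and-forth steps $x_i, x_{i+1}, x_i$, the resulting reduced closed walk is empty. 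I would make this precise as a combinatorial claim: a closed walk in a triangle-free graph is null-homologous if and only if it reduces to the trivial walk under (a) deletion of stationary steps and (b) cancellation of immediate backtracks $\cdots x\,y\,x \cdots \to \cdots x \cdots$.

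Granting that claim, the lemma is almost immediate, because the moves \ref{P1} and \ref{P2} of Definition \ref{def:moves} are precisely the moves (a) and (b) read in $\Pi(H)$: \ref{P1} inserts or (read backwards) deletes a repeated vertex $x_i x_i$, i.e.\ a stationary step, and \ref{P2} replaces an interior vertex $x_i$ by $x_i'$ when $x_{i-1}=x_{i+1}$, which in particular (taking $x_i' = x_{i-1}=x_{i+1}$) collapses a backtrack $x_{i-1}\,x_i\,x_{i-1}$ to the stationary segment $x_{i-1}\,x_{i-1}\,x_{i-1}$, which \ref{P1} then removes. Since all these moves are reversible in $\Pi(H)$, a reduction of $\phi(C)$ to a constant walk gives a path in $\Pi(H)$ from $\phi(C)$ to a constant cycle; one also needs \ref{P3} only to handle the basepoint bookkeeping, since the reductions might pass through the start vertex $x_0$. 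Hence $\phi(C)$ is contractible.

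The step I expect to be the main obstacle is the combinatorial claim itself: that null-homology (over $\ZZ$) of a closed walk in a triangle-free graph forces reducibility to the empty walk by backtrack-cancellation. The forward direction (reducible $\Rightarrow$ null-homologous) is trivial since each reduction move changes the $1$-chain by a boundary generator from Example \ref{example:basprop}(2). For the converse, suppose the reduced walk $W$ is nonempty; since $H$ is triangle-free, $W$ has no backtracks and is not a single loop or $2$-cycle, so it genuinely traverses some honest cycle, and by Example \ref{example:basprop}(4) the edge-sets of cycles form a \emph{free} basis of $H_1(\DH)$. I would argue that the reduced $1$-chain of $W$ is a nonzero element of this free group — e.g.\ by exhibiting an edge traversed a net nonzero number of times, or more carefully by decomposing $W$ into simple cycles and checking that the corresponding basis elements cannot all cancel — contradicting $[D]=[W]=0$. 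Care is needed because a closed walk can traverse an edge forward and backward and still be homologically nontrivial elsewhere, so the decomposition argument, rather than a crude edge-count, is the safe route; this is where freeness of $H_1(\DH)$, hence triangle-freeness of $H$, is used essentially.
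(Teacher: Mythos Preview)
Your central combinatorial claim --- that in a triangle-free reflexive graph a null-homologous closed walk must reduce to the trivial walk by deleting stationary steps and cancelling immediate backtracks --- is false.  Let $H$ consist of two $4$-cycles $a=(v,a_1,a_2,a_3,v)$ and $b=(v,b_1,b_2,b_3,v)$ sharing only the vertex $v$, and take the commutator walk
\[
W \;=\; v,a_1,a_2,a_3,v,b_1,b_2,b_3,v,a_3,a_2,a_1,v,b_3,b_2,b_1,v.
\]
Then $W$ has no stationary steps and no immediate backtracks (each passage through $v$ is flanked by one $a$-neighbour and one $b$-neighbour), so it is already fully reduced; yet its $1$-chain is $a+b-a-b=0$, so $[W]=0$ in $H_1(\DH)$.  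Your fallback ``decompose into simple cycles and use freeness of $H_1$'' does not rescue this: the decomposition is $a,b,a^{-1},b^{-1}$, and these \emph{do} cancel in the free abelian group $H_1(\DH)$ --- that is precisely the point of a commutator.  The underlying obstacle is that moves \ref{P1}--\ref{P3} preserve the free-homotopy class of a closed walk in $\DH$, so contractibility in $\Pi(H)$ is a $\pi_1$ statement, while the hypothesis only gives vanishing in $H_1$; when $H$ has at least two independent cycles, $\pi_1(\DH)$ is a nonabelian free group and the Hurewicz map is not injective.

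For comparison, the paper does not go through your reduced-walk claim.  It instead looks for \emph{any} pair of edges traversed in opposite directions, $[x_i,x_{i+1}]$ and $[x_j,x_{j+1}]=[x_{i+1},x_i]$ with $i<j$ not necessarily consecutive, splits $\phi(C)$ at those edges into two shorter closed walks $C_1$ and $C_2$, and appeals to induction on length; the base case is that a closed walk with no oppositely-traversed edge and trivial homology must be constant.  Note, however, that the inductive step as written needs $[C_1]=0$ and $[C_2]=0$ separately, while the split only yields $[C_1]+[C_2]=0$; the same commutator $W$ shows the two pieces can have classes $[b]$ and $-[b]$.  So your proposal and the paper's argument stumble at exactly the same point, and any complete proof must confront the $\pi_1$-versus-$H_1$ gap head-on rather than hope that freeness of $H_1(\DH)$ does the work.
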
 
    \begin{proof}
      Our proof is by induction on the length of $\phi(C)$, it is clearly true of cycles of length $1$.  
      Let $\phi(C) = (x_0, \dots, x_\ell,x_0)$, for $\ell \geq 1$. 
      As a cycle in $Z_1(\DH)$ we write this as
      \[ [x_0,x_1] + [x_1,x_2] + \dots + [x_\ell,x_0]. \] 
      
      If the negation of some element in the chain appears later in the chain: $[x_j, x_{j+1}] = [x_{i+1}, x_i]$ for some $i < j$, then these disappear in $H_1(\DH)$, and we get in $H_1(\DH)$ that 
      $[ \phi(C) ] = [C_1] + [C_2]$ where
      \[ C_1 = [x_{i+1}, x_{i+2}] +  \dots + [x_{j-1}, x_j] \mbox{ and } C_2 = [x_{j+1}, x_{j+2}] + \dots +[x_{i-1}, x_i] \]
      with indices modulo $\ell$ (one of these might be empty).  The result therefore follows by induction.
      
      So we may assume that the $1$-chain $\phi(C) =  [x_0,x_1] + [x_1,x_2] + \dots + [x_\ell,x_0]$ consists of non-negated summands.   But as $Z_1(\DH)$ is free and $B_1(\DH)$ is as given in Example \ref{example:basprop}(2) the fact that $\phi(C) = 0$ means that all summands in this chain are $[x_0,x_0]$, and so $\phi(C)$
       is constant, and so contractible in $\Pi(H)$. 
    \end{proof}
    
     Thus from Corollary \ref{cor:LNS} we get the following.
     
     \begin{fact}
        Let $H$ be a triangle-free reflexive graph. If $\phi: G \to H$ is flat in $H_1(\DH)$, then $\phi$ is in the constant component of $\bHom(G,H)$.  If all homomorphisms $\phi: G \to H$ are flat, then $\bHom(G,H)$ is connected.
     \end{fact}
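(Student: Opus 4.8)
The plan is to obtain this Fact as an immediate corollary of Lemma \ref{lem:homhom} and Corollary \ref{cor:LNS}, with essentially no new content beyond chaining them together. For the first assertion, I would start from a flat homomorphism $\phi : G \to H$. By Definition \ref{def:flat}, in its primary ``all cycles'' form, this means $[\phi(C)] = 0$ in $H_1(\DH)$ for \emph{every} cycle $C$ of $G$. Since $H$ is triangle-free and reflexive, Lemma \ref{lem:homhom} applies to each such image cycle $\phi(C)$ and yields that $\phi(C)$ is contractible in $\Pi(H)$. As this holds for every cycle $C$ of $G$, the hypothesis of Corollary \ref{cor:LNS} is met verbatim, and we conclude that $\phi$ lies in the constant component of $\bHom(G,H)$.

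For the second assertion I would simply quantify the first one: if every $\phi \in \Hom(G,H)$ is flat, then by the first part every vertex of $\bHom(G,H)$ lies in the constant component, so $\bHom(G,H)$ has at most one component. It has at least one because $H$ is reflexive, so the constant maps $G \to H$ are always present (assuming, as throughout the paper, that $G$ is a nonempty connected graph); hence $\bHom(G,H)$ is connected.

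The main obstacle: there genuinely is not a substantial one here — the statement is a bookkeeping corollary assembled from results already in hand. The only point that warrants a sentence of care is to invoke Definition \ref{def:flat} in its ``for all cycles $C$ of $G$'' formulation rather than its $\NT$-basis formulation, so that the quantifier in Corollary \ref{cor:LNS} (which ranges over \emph{all} cycles of $G$, and where $G$ need not be triangle-free) is matched directly; this sidesteps any need to justify the equivalence of the two formulations when $G$ has triangles. I would also note in passing the trivial nonemptiness of $\bHom(G,H)$ coming from reflexivity of $H$, so that ``connected'' is not vacuous.
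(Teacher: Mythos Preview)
Your proposal is correct and matches the paper's approach exactly: the paper presents this Fact with no separate proof, simply prefacing it with ``Thus from Corollary \ref{cor:LNS} we get the following,'' which is precisely the chaining of Lemma \ref{lem:homhom} into Corollary \ref{cor:LNS} that you describe. Your added remarks about using the ``all cycles'' form of Definition \ref{def:flat} and about nonemptiness of $\bHom(G,H)$ are harmless clarifications the paper leaves implicit.
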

     
 With this we can show the following. 
 \begin{lemma}\label{lem:3toNF}
     If $\NF(H)$ is $\NP$-complete for a triangle-free reflexive cycle $H$, then $\Mix(H)$ is $\coNP$-complete.  
 \end{lemma}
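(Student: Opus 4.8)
The plan is to take membership in $\coNP$ for free from Fact \ref{fact:coNP} and to obtain $\coNP$-hardness by a trivial reduction. Assuming $\NF(H)$ is $\NP$-complete for the given triangle-free reflexive cycle $H$, its complement --- the problem of deciding whether \emph{every} homomorphism $G\to H$ is flat --- is $\coNP$-complete, so it suffices to reduce this complementary problem to $\Mix(H)$. I claim that the identity map $G\mapsto G$ is such a reduction, i.e.\ that for every (reflexive) graph $G$,
\[ \text{every homomorphism }G\to H\text{ is flat} \iff \bHom(G,H)\text{ is connected}. \]

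The forward implication is exactly the Fact stated just after Lemma \ref{lem:homhom}, valid since $H$ is triangle-free and reflexive. For the reverse implication I would prove the contrapositive: if some homomorphism $\phi:G\to H$ is not flat, then $\bHom(G,H)$ is disconnected. The key point is that the class $[\psi(C)]\in H_1(\DH)$ is an invariant of the component of $\psi$ in $\bHom(G,H)$, for each fixed cycle $C$ of $G$. Granting this: $H$ reflexive gives a constant homomorphism $c:G\to H$, which is flat because the image of every cycle under $c$ is a constant cycle, a boundary and hence $0$ in $H_1(\DH)$ by Example \ref{example:basprop}(2); so if $\phi$ is not flat, witnessed by $[\phi(C)]\neq 0$ for some cycle $C$, then $[\phi(C)]\neq 0 = [c(C)]$ shows $\phi$ and $c$ lie in distinct components, i.e.\ $\bHom(G,H)$ is disconnected. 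To establish the invariant, suppose $\psi$ and $\psi'$ lie in a common component and pick a walk $\psi=\psi_0,\dots,\psi_m=\psi'$ between them; as recalled in Section \ref{sect:outline} this corresponds to a homomorphism $\Phi:P_m\times G\to H$ whose restriction to each slice $i\times G$ is $\psi_i$. Restricting $\Phi$ further to the subgraph $P_m\times C$ --- where $C$ inherits the loops of $G$, so that $P_m\times C$ contains the $2$-simplices used in the proof of Fact \ref{fact:preshom} --- gives a homomorphism $P_m\times C\to H$, and Fact \ref{fact:preshom} yields $[\psi(C)]=[\psi_0(C)]=[\psi_m(C)]=[\psi'(C)]$.

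Combining the two implications, $G$ is a yes-instance of the complement of $\NF(H)$ precisely when $\bHom(G,H)$ is connected, so the identity map is a polynomial-time reduction of a $\coNP$-complete problem to $\Mix(H)$; with Fact \ref{fact:coNP}, this gives that $\Mix(H)$ is $\coNP$-complete. I do not anticipate a real obstacle: the only mildly delicate steps are the routine restriction of a homomorphism out of $P_m\times G$ to one out of $P_m\times C$ so that Fact \ref{fact:preshom} applies, and keeping track of the standing conventions under which $H$ and $G$ are connected (and $G$ reflexive). Note that the argument uses only that $H$ is triangle-free and reflexive, not that it is a cycle; ``cycle'' appears in the hypothesis simply because that is the class of graphs $H$ for which $\NF(H)$ has been shown to be $\NP$-complete.
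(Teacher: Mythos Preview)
Your proof is correct and follows the same overall strategy as the paper: membership from Fact~\ref{fact:coNP}, and hardness via the identity reduction, showing that $\bHom(G,H)$ is connected if and only if every $H$-colouring of $G$ is flat. The one substantive difference is in the direction ``non-flat $\Rightarrow$ disconnected''. You argue directly that $[\psi(C)]\in H_1(\DH)$ is a reconfiguration invariant by restricting a path in $\bHom(G,H)$ to $P_m\times C$ and invoking Fact~\ref{fact:preshom}; the paper instead routes through homotopy, arguing that $[\phi(B)]\neq 0$ forces $\phi(B)$ to be non-contractible in $\Pi(H)$ and then (implicitly using the converse of Corollary~\ref{cor:LNS} from \cite{LNS21}) that $\phi$ is not in the constant component. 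Your route is a little more self-contained, since Fact~\ref{fact:preshom} is proved in full in the paper while the converse of Corollary~\ref{cor:LNS} is not stated there; the paper's route, on the other hand, does not need the observation that the restriction $P_m\times C\hookrightarrow P_m\times G$ carries the $2$-simplices used in Fact~\ref{fact:preshom}, which is where your assumption that $G$ is reflexive is actually used.
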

 \begin{proof}
   We observe that the negation of $\NF(H)$ can be solved by $\Mix(H)$.  Indeed let $G$ be an instance of $\NF(H)$.  If it is a YES instance, then there is a non-flat homomorphism $\phi: G \to H$.  Thus for some cycle $B$ of $G$, $[\phi(B)] \neq 0$. By Lemma \ref{lem:homhom} we have then that $\phi$ is non-contractible in $\Pi(H)$, so $G$ is NO instance of $\Mix(H)$.
   
   On the other hand, if $G$ is a NO instance of $\NF(H)$ then all homomorphisms $\phi: G \to H$ are flat, and so by Lemma \ref{lem:homhom} and Corollary \ref{cor:LNS} reconfigure to constant homomorphisms.  Thus $G$ is a YES instance of $\Mix(H)$.   
 \end{proof}

  Observe now that Theorem \ref{thm:full} follows from Theorem \ref{thm:NF}.
 
  \begin{proof}[Proof of Theorem \ref{thm:full}]
    Let $H$ be a triangle-free reflexive graph containing a cycle.  By Example  \ref{example:basprop}(4) $H_1(\DH)$ is free, and as it contains a cycle, it is nontrivial,
    so we may apply Theorem \ref{thm:NF} to get that $\NF(H)$ is $\NP$-complete.  Lemma \ref{lem:3toNF} then tells us that $\Mix(H)$ is $\coNP$-complete. So Theorem \ref{thm:full} is proved.
  \end{proof}

    For the rest of the paper, we therefore concentrate on proving Theorem \ref{thm:NF}.

 \section{The homology sum gadget}\label{sect:windsum}

For an integer $n$ let $[n]$ denote the set $\{0,1, \dots, n-1\}$. An $n$-cycle will always have vertex set $[n]$, with consecutive vertices modulo $n$ being adjacent.  Let $\sH$ be a connected triangle-free reflexive graph with nontrivial homology, and let $\ZH$ be a particular nontrivial cycle of minimum girth. Let $g \geq 4$ be its girth, and let $Z$ be a generic $g$-cycle.  Throughout the rest of the paper, $\sH$, $\ZH$, $g$ and $Z$ will always have this setup.  

In constructions, we will often identify two copies of $Z$.  Unless we say otherwise, we do so identically-- identifying the copy of the vertex $i$ in one with the copy of $i$ in the other.

  Let $\sB$ be the symmetric reflexive cycle of girth $sg$. Observing that its vertex set can be written $V(\sB) = \{ ig + j \mid i \in [s], j \in [g] \}$, define the following maps, which are clearly in $\Hom(\sB,Z)$. Define  $\beta: \sB \to Z$ by  \[ \beta(ig + j ) = i, \] and for $k \in [s]$ define $\alpha_k: \sB \to Z$ by     \[ \alpha_k(ig + j ) = \PWF{j}{k = i}{0}{\mbox{ otherwise.}} \]   Intuitively, $\beta$ maps the first $s$ vertices of $\sB$ to $0$, the next $s$ vertices to $1$, etc.,  while $\alpha_k$ maps all vertices to $0$ except the $k^{th}$ run of $g$ vertices which it maps  around $Z$. Observe that there are paths from $[\beta]$ to $[\alpha_0]$, and from $[\alpha_0]$ to $[\alpha_1]$ to $\dots$ to $[\alpha_k]$ in $\bHom(\sB ,Z)$ in which the first vertex is always $0$. 
  In the case of $(s,g) = (3,4)$ for example, writing $f: \sB \to Z$ as $f(0)f(1), \dots, f(s\cdot g)$ it is easy to find such a path  
   \[000111222333 \sim 011111222333 \sim 012222222333 \sim 012333333333 \sim 012300000000 \]
  from $\beta$ to $\alpha_0$ and 
   \[012300000000 \sim 001230000000 \sim 000123000000 \sim \dots \sim 000001230000 \]
   from $\alpha_0$ to $\alpha_1$. 
  From this it is not too hard to see that the following is true.

  \begin{fact}\label{fact:lexists}
    There is an integer $\ell = \ell_s \geq 2$ such that for each $i \in [s]$ there
    is a homomorphism $\gamma_i : P_\ell \vtimes \sB \to Z$ such that
     \begin{enumerate}
      \item $\gamma_i|_{0 \times \sB} = \alpha_i$, and  
      \item $\gamma_i|_{\ell \times \sB} = \beta$, and 
      \item $\gamma_i$ is constant $0$ on $T:= P_\ell \times 0$.
      \end{enumerate}
    \end{fact}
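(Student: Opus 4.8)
The plan is to invoke the standard equivalence recalled in Section~\ref{sect:outline}: a walk $(\phi_0,\dots,\phi_\ell)$ of length $\ell$ in $\bHom(\sB,\sC)$ from $\phi_0$ to $\phi_\ell$ corresponds to the homomorphism $\Phi\colon P_\ell\vtimes\sB\to\sC$ given by $\Phi(j,x)=\phi_j(x)$, and this $\Phi$ satisfies $\Phi|_{0\times\sB}=\phi_0$ and $\Phi|_{\ell\times\sB}=\phi_\ell$. So conditions (1) and (2) amount to producing, for each $i\in[s]$, a walk in $\bHom(\sB,\sC)$ from $\alpha_i$ to $\beta$, subject to the extra demand that all $s$ of these walks be taken of one common length $\ell$. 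As for condition (3): since the vertex $0$ of $\sB$ carries a loop, $T=P_\ell\times 0$ is an isomorphic copy of $P_\ell$, and $\Phi|_T$ is exactly the walk $j\mapsto\phi_j(0)$ in $\sC$; hence $\Phi$ is constant $0$ on $T$ precisely when every homomorphism occurring in the walk maps the vertex $0$ of $\sB$ to the vertex $0$ of $\sC$.

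First I would assemble, for each $i\in[s]$, a walk $W_i$ from $\alpha_i$ to $\beta$ with this last property. Reversing the walk from $\alpha_0$ to $\alpha_i$ exhibited just before the statement gives a walk from $\alpha_i$ to $\alpha_0$, and concatenating it with the reverse of the walk from $\beta$ to $\alpha_0$ gives $W_i$. Every homomorphism appearing in those walks sends the vertex $0$ of $\sB$ to $0$ — this is visible in the displayed $(s,g)=(3,4)$ examples, where the first symbol is always $0$, and the analogous walks for general $s$ and $g$ behave identically — and the endpoints $\alpha_i,\beta$ do too, as $\alpha_i(0)=\beta(0)=0$. So each $W_i$ meets the requirement for (3).

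To make the lengths agree, note that $\bHom(\sB,\sC)$ is reflexive: every $\phi\in\Hom(\sB,\sC)$ is adjacent to itself, because it is edge-preserving. Hence a walk can be lengthened by any amount by repeating a homomorphism, with no effect on its endpoints or on the property of fixing the vertex $0$. Writing $m_i$ for the length of $W_i$, put $\ell=\ell_s:=\max\{2,m_0,\dots,m_{s-1}\}$, so $\ell\geq 2$, and pad each $W_i$ to a walk of length exactly $\ell$ from $\alpha_i$ to $\beta$ still fixing the vertex $0$ of $\sB$.

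Finally, applying the equivalence to the padded walks yields homomorphisms $\phi_i\colon P_\ell\vtimes\sB\to\sC$ with $\phi_i|_{0\times\sB}=\alpha_i$ and $\phi_i|_{\ell\times\sB}=\beta$, which is (1) and (2), while (3) follows from the identification of $T$ with $P_\ell\times 0$ and the fact that every homomorphism in $W_i$ sends the vertex $0$ of $\sB$ to $0$. The only point that needs any care is checking that the walks from $\beta$ to $\alpha_0$ and from $\alpha_0$ to the $\alpha_i$ can be chosen to fix the vertex $0$ throughout; this is exhibited explicitly for $(s,g)=(3,4)$ above and is routine in general, so I expect no real obstacle.
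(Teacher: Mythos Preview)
Your proposal is correct and is precisely the argument the paper has in mind: the paper does not give a formal proof but simply observes that there are paths from $\beta$ to $\alpha_0$ and from $\alpha_0$ to each $\alpha_i$ in $\bHom(\sB,\sC)$ fixing the vertex $0$, and then states that the fact ``is not too hard to see'' from this. Your write-up fills in exactly those details---reversing and concatenating the walks, padding to a common length via reflexivity of $\bHom(\sB,\sC)$, and translating back through the walk/homomorphism equivalence---so there is nothing to add.
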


    With this $\ell$ we are now ready to construct $S_s$. See Figure \ref{fig:windsum}.
    \begin{construction}\label{const:sum}
    Let $S_s$ be constructed from $P_\ell \vtimes \sB$, where $\ell = \ell_s$,
    as follows. 
      \begin{enumerate}
          \item In the last slice $\ell \times \sB$ of $P_\ell \vtimes \sB$, for each $j \in [g]$ identify the vertices in the set
             \[ \{ (\ell, i + js) \mid i \in [s]. \} \]
             This results in a copy of $\ZS$ of $Z$.   
          \item In the first slice $0 \times \sB$ of $P_\ell \vtimes \sB$, identify the vertices in the set 
              \[ \{ (0, ig) \mid i \in [s]. \} \]
              This creates $s$ copies of $Z$ sharing the vertex $(0,0)$. Refer to these as $\A_0', \dots, \A_{s-1}'$.
          \item For each $i \in [s]$, take a new copy of $P_1 \times Z$; identify one slice of it with $\A_i'$ and call the other slice $\A_i$.     
      \end{enumerate}
\end{construction}

 Let $S_s^{(0)}$ refer to $P_\ell \vtimes \sB$ and $S_s^{(i)}$ to the graph constructed in  step (i) of the construction.  The identifications in steps (1) and (2), can be viewed as homomorphisms $q_1: S_s^{(0)} \to S_s^{(1)}$ and $q_2: S_s^{(1)} \to S_s^{(2)}$. Notationally we should use names such as $q_2 \circ q_1( (\ell, 0) )$ for vertices of $ S_s^{(2)} \leq S_s^{(3)} = S_s$ but the abuse of calling this $(\ell, 0)$ is convenient and does not cause problems.

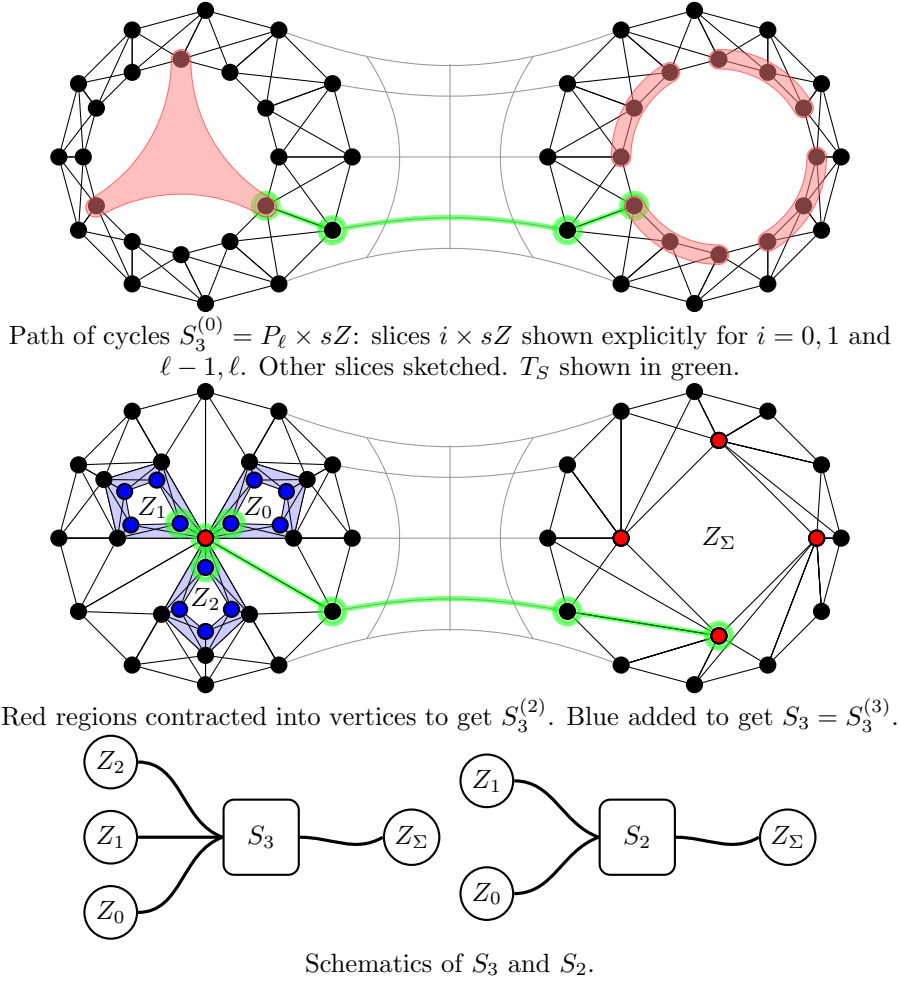
\begin{figure}     
\centering
 \begin{tikzpicture}[scale=.65, every node/.style = {vert}]
       \begin{pgfonlayer}{fg1}
        \foreach \i[evaluate=\i as \deg using {90 - (30*\i)},
                    evaluate=\i as \ri using {int(Mod(-\i,12))}] in {0,...,11}{
          \draw (\deg:2cm) + (-5.5,0) node (1\i) {};
          \draw (\deg:3cm) + (-5,0) node (2\i) {};
          \draw (\deg:2cm) + (5.5,0) node (7\ri) {};
          \draw (\deg:3cm) + (5,0) node (6\ri) {};     }  
        \end{pgfonlayer}
        \foreach \i[evaluate=\i as \next using {int(Mod(\i+1,12))}] in {0,...,11}{
            \vsquare[none]{1}{2}{\i}{\next}
           \vsquare[none]{7}{6}{\i}{\next} }
        \foreach \i/\b in {1/20,2/10,3/0,4/-10,5/-20}{\draw[gray!80] (2\i) edge[bend right = \b] (6\i);}
        \foreach \i/\j in {-1.7/2.05,0/1.9,1.7/2.05}{\draw[gray!80] (\i,\j) edge[bend right = 20*\i] (\i,-\j);}       
       \begin{pgfonlayer}{fg2}
         \begin{scope}[xshift = -5.5cm]
           \path[draw, red, use Hobby shortcut, fill=red!50, opacity=.5] (-.2,2) arc (180:0:.2) arc (180:240:3.3) arc (60:-120:.2) arc (60:120:3.3) arc(300:120:.2) arc (-60:0:3.3) -- cycle;
         \end{scope}
         \begin{scope}[xshift = 5.5cm]
            \path[draw, red, use Hobby shortcut, fill=red!50, opacity=.5] (0,1.8) arc (270:90:.2) arc (90:30:2.2) arc (30:-150:.2) arc (30:90:1.8) -- cycle;
           \path[draw, red, use Hobby shortcut, fill=red!50, opacity=.5] (1.8,0) arc (180:0:.2) arc (0:-60:2.2) arc (300:120:.2) arc (-60:0:1.8) -- cycle; 
            \path[draw, red, use Hobby shortcut, fill=red!50, opacity=.5] (0,-1.8) arc (90:-90:.2) arc (270:210:2.2) arc (210:30:.2) arc (210:270:1.8) -- cycle; 
           \path[draw, red, use Hobby shortcut, fill=red!50, opacity=.5] (-1.8,0) arc (360:180:.2) arc (180:120:2.2) arc (120:-60:.2) arc (120:180:1.8) -- cycle;  
         \end{scope}
       \end{pgfonlayer} 

         \begin{scope}[color=green, line width = 2pt, opacity=.5, node distance = 0em, every node/.style = {circle, minimum size = .3cm, draw = green, fill=green!50, opacity=.5}]
         \path (14) node {} edge (24); \path (24) node {}  edge[bend right = -10] (64); \path (64) node {} edge (74) ; \path (74) node {}; \
         \end{scope}
         
    \end{tikzpicture}

     Path of cycles $S_3^{(0)}=P_\ell \vtimes \sB$: slices $i \times \sB$ shown explicitly for $i = 0,1$ and $\ell-1,\ell$. Other slices sketched. $T_S$ shown in green.

     \begin{tikzpicture}[scale=.65, every node/.style = {vert}]  
   \begin{pgfonlayer}{fg1}
         \foreach \i[evaluate=\i as \deg using {90 - (30*\i)},
                     evaluate=\i as \ri using {int(Mod(-\i,12))}] in {0,...,11}{
           \draw (\deg:3cm) + (-5,0) node (2\i) {};
           \draw (\deg:3cm) + (5,0) node (6\ri) {};
         }  
          \foreach \i[evaluate=\i as \deg using {90 - (90*int(floor(Mod(-\i,12)/3))))}] in {0,...,11}{
           \draw (\deg:2cm) + (5.5,0) node[fill=red] (7\i) {};
         }  
     
        \draw (0,0) + (-5,0) node[fill=red] (10) {};
        \draw ({1.8*cos(60)},{1.8*sin(60)}) + (-5,0) node (11) {};
        \draw ({2.4*cos(30)},{2.4*sin(30)}) + (-5,0) node (12) {};
        \draw ({1.8*cos(0)},{1.8*sin(0)}) + (-5,0) node (13) {};
        \draw (0,0) + (-5,0) node[fill=red] (14) {};
        \draw ({1.8*cos(-60)},{1.8*sin(-60)}) + (-5,0) node (15) {};
        \draw ({2.4*cos(-90)},{2.4*sin(-90)}) + (-5,0) node (16) {};
        \draw ({1.8*cos(-120)},{1.8*sin(-120)}) + (-5,0) node (17) {};
        \draw (0,0) + (-5,0) node[fill=red] (18) {};
        \draw ({1.8*cos(180)},{1.8*sin(180)}) + (-5,0) node (19) {};
        \draw ({2.4*cos(150)},{2.4*sin(150)}) + (-5,0) node (110) {};
        \draw ({1.8*cos(120)},{1.8*sin(120)}) + (-5,0) node (111) {};

        \draw ({.6*cos(30)},{.6*sin(30)}) + (-5,0) node[fill=blue] (00) {};
        \draw ({1.55*cos(60-10)},{1.55*sin(60-10)}) + (-5,0) node[fill=blue] (01) {};
        \draw ({1.91*cos(30)},{1.91*sin(30)}) + (-5,0) node[fill=blue] (02) {};
        \draw ({1.55*cos(0+10)},{1.55*sin(0+10)}) + (-5,0) node[fill=blue] (03) {};
        \draw ({.6*cos(-90)},{.6*sin(-90)}) + (-5,0) node[fill=blue] (04) {};
        \draw ({1.55*cos(-60-10)},{1.55*sin(-60-10)}) + (-5,0) node[fill=blue] (05) {};
        \draw ({1.91*cos(-90)},{1.91*sin(-90)}) + (-5,0) node[fill=blue] (06) {};
        \draw ({1.55*cos(-120+10)},{1.55*sin(-120+10)}) + (-5,0) node[fill=blue] (07) {};
        \draw ({.6*cos(150)},{.6*sin(150)}) + (-5,0) node[fill=blue] (08) {};
        \draw ({1.55*cos(180-10)},{1.55*sin(180-10)}) + (-5,0) node[fill=blue] (09) {};
        \draw ({1.91*cos(150)},{1.91*sin(150)}) + (-5,0) node[fill=blue] (010) {};
        \draw ({1.55*cos(120+10)},{1.55*sin(120+10)}) + (-5,0) node[fill=blue] (011) {}; 

        \draw (30:1.25) + (-5,0) node[draw=none, fill=none] (){$\A_0$};
        \draw (150:1.25) + (-5,0) node[draw=none, fill=none] (){$\A_1$};
        \draw (270:1.25) + (-5,0) node[draw=none, fill=none] (){$\A_2$};
        \draw (0:0) + (5.5,0) node[draw=none, fill=none] (){$\ZS$};
         \end{pgfonlayer}
        
        \foreach \i[evaluate=\i as \next using {int(Mod(\i+1,12))}] in {0,...,11}{
            \vsquare[none]{1}{2}{\i}{\next}
            \vsquare[none]{7}{6}{\i}{\next} }
        
        \foreach \a/\b in {0/1, 1/2, 2/3, 3/0, 4/5, 5/6, 6/7, 7/4, 8/9, 9/10, 10/11, 11/8}{
        \vsquare[blue!20]{0}{1}{\a}{\b}}

        \foreach \i/\b in {1/20,2/10,3/0,4/-10,5/-20}{\draw[gray!80] (2\i) edge[bend right = \b] (6\i);}  \foreach \i/\j in {-1.7/2.05,0/1.9,1.7/2.05}{\draw[gray!80] (\i,\j) edge[bend right = 20*\i] (\i,-\j);}
        
         \begin{scope}[color=green, line width = 2pt, opacity=.5, node distance = 0em, every node/.style = {circle, minimum size = .3cm, draw = green, fill=green!50, opacity=.5}]
         \path (14) node {} edge (24); \path (24) node {}  edge[bend right = -10] (64); \path (64) node {} edge (74) ; \path (74) node {}; 
         \path (04) node {} edge (14);       \path (08) node {} edge (14);       \path (00) node {} edge (14);  

     \end{scope}
        
\end{tikzpicture}

Red regions contracted into vertices to get $S_3^{(2)}$. Blue added to get $S_3 = S_3^{(3)}$.

 \begin{tikzpicture}[every node/.style = {vert}] 
   
    \draw (2,0) node[cycle, solid] (Z) {$\ZS$};
    \draw (0,0) node[square] (Sq) {$S_3$};
    \foreach \i/\y in {0/-1,1/0,2/1}{
          \draw (-2,\y) node[cycle, solid] (A\i) {$\A_{\i}$}; }  
     \draw (Sq.east) edge[very thick, out=0,in=-150  ] (Z.west);
     \draw (Sq.west) edge[very thick, out=200, in=0] (A0.east);
     \draw (Sq.west) edge[very thick, out=180, in=0] (A1.east);
     \draw (Sq.west) edge[very thick, out=160, in=0] (A2.east);

      \begin{scope}[xshift=5cm]
    \draw (2,0) node[cycle, solid] (Z) {$\ZS$};
    \draw (0,0) node[square] (Sq) {$S_2$};
    \foreach \i/\y in {0/-.75,1/.75}{
          \draw (-2,\y) node[cycle, solid] (A\i) {$\A_{\i}$}; }  
     \draw (Sq.east) edge[very thick, out=0,in=-150  ] (Z.west);
     \draw (Sq.west) edge[very thick, out=200, in=0] (A0.east);
     \draw (Sq.west) edge[very thick, out=160, in=0] (A1.east);

      \end{scope}
      
     \end{tikzpicture}

     Schematics of $S_3$ and $S_2$.
 
      \caption{Construction \ref{const:sum} of the sum gadget $S = S_3$ with $g = 4$. $T_S$ overset in green.}
      \label{fig:windsum}
 \end{figure}

 This allows us the following definition that we will need later. 
 
 \begin{definition}
   A cycle in $S_s$ is a {\em gadget-cycle} if it is 
   a slice $i \times \sB$ of $S_s^{(0)}$ or is $\A_i$ for some $i$.  The cycles $\A_i$ and $\ZS$ are {\em end gadget-cycles}.
 \end{definition}
 
 Note that the cycles $\A'_0, \dots, \A'_s$ are considered as a single gadget-cycle.
 With this definition we prove the following desired properties of $S_s$.  

 For intuition, recall that in the case that $H$ is simply the cycle $\ZH$, the value $[\Gamma(C)]$ really just counts, with orientation, the number of times that $\Gamma$ winds a cycle $C$ around the target $\ZH$.  The end gadget-cycles are copies of $Z$ so under a mapping can only wind $-1,0$ or $1$ times around $\ZH$. Part (2) of the following lemma then says that the winds of the $\A_i$ sum to the wind of $\ZS$.  
  
 \begin{lemma}\label{gad:sum}
   The graph $S = S_s$ contains end gadget-cycle copies $\ZS ,\A_0, \dots, \A_{s-2}$ and $\A_{s-1}$ of $Z$, which are pairwise distance at least $2$ apart;  and a tree $T = T_S$ that
   contains the copy of $0$ in each of these end gadget-cycles. 
   Furthermore,  the following are true.   
   \begin{enumerate}
    \item
       For any homomorphism $\phi: S \to \sH$, if $[\phi(\ZS)] = 0$, then $[\phi(C)]=0$ for all gadget-cycles $C$ of $S$ except possibly the $\A_i$. 
   \item For any homomorphism $\phi :S \to \sH$, we have 
     \[ [ \phi(\ZS) ] = \sum_{i \in [s]} [\phi(\A_i)]. \] 
   \item For each $i \in [s]$ there is a homomorphism $\Gamma_i: S \to \ZH \leq \sH$
     such that $[\Gamma_i(\A_i)] = [\ZH]$ and such that $\Gamma_i$ is $0$ on
     $T$ and on $\A_j$ for $j \neq i$.   
   
       \end{enumerate}
     \end{lemma}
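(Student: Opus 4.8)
The plan is to unwind the three-step construction of $S_s$ and track homology through the homomorphisms $q_1: \ccP \to S_s^{(1)}$, $q_2: S_s^{(1)} \to S_s^{(2)}$, and the final attachment of the $P_1 \times \sC$ rungs. First I would verify the structural claims: the tree $T = T(S)$ is the image of $T = P_\ell \times 0$ from Fact \ref{fact:lexists} under $q_1, q_2$, together with the extra vertices/edges of the rungs $P_1 \times \sC$ at the vertex $0$; since $q_1, q_2$ identify only vertices in the end slices and each identification set meets $P_\ell \times 0$ in exactly one vertex (the vertex $(0,0)$ or, in step (1), vertices $(\ell, js)$, of which only $j=0$ is the relevant copy of $0$), the image of this path stays a tree, and it visibly contains the copy of $0$ in $Z$ and in each $A_i'$, hence in each $A_i$. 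The pairwise-distance-$\ge 2$ claim is a short check: $Z$ lies in the last slice, the $A_i'$ in the first slice, $\ell \ge 2$ separates them, and the $A_i$ are one further rung out and pass through distinct vertices $(0, ig)$ of the first slice which are at distance $\ge 2$ in $\sB$ (using $g \ge 4$, so $s \cdot g$-cycle has the two identified-copies' neighbourhoods disjoint — one must be mildly careful that after the identification in step (2) the point $(0,0)$ is shared, so the relevant statement is that $Z, A_0, \dots, A_{s-1}$ are pairwise distance $\ge 2$, which holds because each $A_i$ extends a rung away from that shared vertex).

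For part (1): a gadget cycle that is a slice $i \times \sB$ of $\ccP$ is, by Fact \ref{fact:preshom} applied to $\Gamma \circ q_2 \circ q_1$ restricted to $\ccP = P_\ell \times \sB$, homologous under $\Gamma$ to the image of the last slice $\ell \times \sB$; but $q_2 \circ q_1(\ell \times \sB)$ is exactly the cycle $Z$ (the identifications in step (1) collapse the $sg$-cycle $\ell \times \sB$ onto the $g$-cycle $Z$ in a degree-one way), so $[\Gamma(i \times \sB)] = [\Gamma(Z)] = 0$ by hypothesis. The only gadget cycles not of this form are the $A_i$, which is what the statement allows. (One should note that the first slice $0 \times \sB$ is, after $q_2$, the concatenation of the $A_i'$ around the shared vertex $(0,0)$; this is consistent with Fact \ref{fact:preshom} and will feed directly into part (2).)

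For part (2): run Fact \ref{fact:preshom} on $\Gamma \circ q_2 \circ q_1$ over $\ccP$ to get $[\Gamma(Z)] = [\Gamma(q_2q_1(0 \times \sB))]$; now $q_2$ sends the cycle $0 \times \sB$ to the concatenation $A_0' \cdot A_1' \cdots A_{s-1}'$ (each run of $g$ vertices $\{(0, ig), \dots, (0, ig + g-1)\}$ becomes the cycle $A_i'$ based at $(0,0)$, and the identifications glue these $s$ runs cyclically at that single point), so in $H_1(\DH)$ we get $[\Gamma(0 \times \sB)] = \sum_{i \in [s]} [\Gamma(A_i')]$; finally each rung $P_1 \times \sC$ joining $A_i'$ to $A_i$ gives $[\Gamma(A_i')] = [\Gamma(A_i)]$ by Fact \ref{fact:preshom} again (a $P_1 \times \sC$ is a path-of-cycles of length $1$). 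Summing yields $[\Gamma(Z)] = \sum_{i \in [s]} [\Gamma(A_i)]$. For part (3): for fixed $i$, take the homomorphism $\phi_i: P_\ell \times \sB \to \sC$ from Fact \ref{fact:lexists} — it restricts to $\alpha_i$ on slice $0$, to $\beta$ on slice $\ell$, and is constant $0$ on $T = P_\ell \times 0$. Because $\beta$ is constant ($=i$) on each set $\{(\ell, i+js)\}_{i \in [s]}$ identified in step (1), $\phi_i$ descends through $q_1$; because $\alpha_i$ is constant ($=0$) on the set $\{(0, ig)\}_{i \in [s]}$ identified in step (2) — indeed $\alpha_i(i'g) = 0$ for all $i'$ — it descends through $q_2$; so we get $\Gamma_i^{(2)}: S_s^{(2)} \to \sC$ which is $0$ on $T$, equals (the descent of) $\alpha_i$ on the $A_j'$, hence wraps once around $\sC$ on $A_i'$ and is constant $0$ on $A_j'$ for $j \ne i$. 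Extend over the rungs: on the rung attached to $A_i'$ use the identity-type map $P_1 \times \sC \to \sC$ (projection), giving $[\Gamma_i(A_i)] = [\sC] \ne 0$; on every other rung map everything to $0$. Composing the resulting map $S_s \to \sC$ with the inclusion $\sC \hookrightarrow \sH$ gives $\Gamma_i$ with $[\Gamma_i(A_i)] = [\sC]$ and $\Gamma_i \equiv 0$ on $T$ and on $A_j$, $j \ne i$, as required.

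The main obstacle is bookkeeping the identifications cleanly: verifying that $q_2 q_1$ really sends the end slices $0 \times \sB$ and $\ell \times \sB$ to, respectively, the cyclic concatenation $A_0' \cdots A_{s-1}'$ and the single $g$-cycle $Z$ as cycles in $Z_1(\mathbf{S}_s)$ (degree exactly one, correct orientation so the signs line up in part (2)), and that Fact \ref{fact:lexists}'s maps $\phi_i$ genuinely factor through both quotients — i.e. that the relevant restrictions $\alpha_i$ and $\beta$ are constant on each identified class. These are all finite, explicit checks given the coordinate descriptions of $\sB$, $\beta$, $\alpha_k$, $q_1$, $q_2$, but they are the substance; once they are in place, parts (1)--(3) are immediate applications of Fact \ref{fact:preshom}.
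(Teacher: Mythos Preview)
Your proposal is correct and follows essentially the same route as the paper: both arguments unwind the construction via the quotient maps $q_1, q_2$, invoke Fact~\ref{fact:preshom} on $\Gamma \circ q_2 \circ q_1$ for parts (1) and (2), and obtain $\Gamma_i$ in part (3) by checking that the map $\phi_i$ from Fact~\ref{fact:lexists} descends through both quotients (since $\beta$ and $\alpha_i$ are constant on the respective identification classes) and then extending over the $P_1 \times \sC$ rungs. Your write-up is simply more explicit about the bookkeeping---the degree-one check for $q_2q_1(\ell \times \sB) \sim Z$, the decomposition $q_2q_1(0 \times \sB) = A_0' \cdots A_{s-1}'$, and the distance-$2$ verification---which the paper treats as immediate from the construction; note only the minor variable clash in your part~(3) where $i$ is used both for the fixed index and as the summation index in $\{(\ell, i+js)\}_{i \in [s]}$.
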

     \begin{proof}
       The statements before the `Further' are just recollections of the construction. 
       
      For part (1), let $\phi: S \to \sH$ be a homomorphism with $[\phi(\ZS)] = 0$. So $\phi' = \phi \circ (q_2 \circ q_1)$ has $[\phi'(\ell \times \sB)] = 0$. By Fact \ref{fact:preshom} we thus get $[\phi'(i \times \sB)] = 0$ for all $i \in [\ell]$, and thus $[\phi(i \times \sB)] = [\phi(q_2 \circ q_1(i \times \sB))] = 0$, as needed.
      
       For part (2), let $\phi$ be any homomorphism of $S$ to $\sH$. 
       Where $S_s^{(0)}$ is as in Construction \ref{const:sum}, we have
       $\ZS = (q_2 \circ q_1)(\ell \times \sB)$ and $(q_2 \circ q_1)(0 \times \sB) = \sum_{i \in [s]}\A'_i$, so 
       the homomorphism $\phi' = \phi \circ (q_2 \circ q_1): S_s^{(0)} \to S_s^{(2)} \to H$ satisfies
          \[  [\phi(\ZS)] = [\phi'( \ell \times \sB )] = [\phi'(0 \times \sB)] = \sum [\phi(\A'_i)], \]
        where the middle equality is from Fact \ref{fact:preshom}. Using Fact \ref{fact:preshom} again on the copies of $P_1 \times Z$ connecting the  $\A_i$ and $\A_i'$ we get (2).

      Now let $T$ be the tree in $S_s$ induced by the vertices $(i, 0)$ in $S^{(2)}_s$ and the vertex $0$ in $\A_i$ for each $i$.
      To get part (3), observe that the homomorphism $\gamma_i: S_s^{(0)} \to Z$ from Fact \ref{fact:lexists} maps vertices identified by $(q_2 \circ q_1)$ to the same images, so
      factors as $\gamma_i = (q_2 \circ q_1) \circ \Gamma'_i$ for some $\Gamma'_i: S_s^{(2)} \to Z$ that is $0$ on $T \cap S^{(2)}$ and on $\A'_j$ for all $j \neq i$.
      Extend $\Gamma'_i$ to $\Gamma_i: S_s \to Z$ by letting it act on $\A_j$ as is does on $\A'_j$ for all $j \in [s]$; and then map it identically from $Z$ to $\ZH$.

      \end{proof}

  \section{The main construction }\label{sect:stage1}

Having built our gadget $S_s$, we now give our main construction, that of $\Gb$.
  This construction was outlined in Section \ref{sect:outline}, and most of it is illustrated in Figure \ref{fig:mainconst}.  In Propostion \ref{prop:reduct1}
we give most of the desired properties, and then use it to prove Theorem \ref{thm:NF} modulo Proposition \ref{prop:topo} which we prove in the next section. 

  \begin{construction}\label{const:G*}
  Recall that $Z$ is a cycle of girth at least $4$ isomorphic to the the shortest cycle in $H$.  
  For an irreflexive graph $G$ construct $\Gb$ as follows.
  \begin{enumerate}
  \item For every vertex $v$ of $G$ let $\Gb$ contain three copies $\A^v_0, \A^v_1$ and
    $\A^v_2$ of $Z$. Let $\Gb$ also contain a copy $\sZ$ of $Z$. Let all of these be
    disjoint. 
  \item For every vertex $v$ of $G$ create a new copy $S^v$ of the sum gadget $S_3$;
      identify the copy of $\ZS$ in $S^v$ with $\sZ$, and for each $i \in [3]$ identify the copy of $\A_i$ in $S^v$ with $\A^v_i$ .
  \item For every vertex $v$ of $G$ and every pair of distinct indices $i < j \in [3]$,
    create a new copy $S^v_{i,j}$ of $S_2$; identify its copy of $\A_0$ with $\A^v_i$ and its
    copy of $\A_1$ with $\A^v_j$. However, in identifying this copy of $\A_1$ with $\A^v_j$
    identify the copy of the vertex $i$ in $\A_1$ with the vertex $g-i$ in $\A^v_j$.
   \item For each edge $uv$ in $G$ and each $i \in [3]$, create a new copy $S_i^{uv}$ of $S_2$;     identify its copy of $\A_0$ with $\A_i^u$ and its copy of $\A_1$ with $\A_i^v$.  \item  Let $\sT$ be the union of the trees $T_S$ in all the gadgets $S$ used in the previous steps.  For each cycle $C$ in a basis of $H_1(\bold{T_S})$ add a new vertex $v_C$ with edges to every vertex of $C$. 
  \end{enumerate}
  Let $\Ga[i]$ refer to the graph constructed in step (i) of the construction. 
  \end{construction}

\begin{figure}
\captionsetup{singlelinecheck=off}

\caption*{Step $(2)$.}
     
\begin{tikzpicture}[every node/.style = {vert}]
     \foreach \v/\x/\y in {a/0/0, b/3/0, c/6/0}{
       \draw (\x,\y) node[clump] (c\v) {}; }
     \foreach \v in {a, b, c}{
       \foreach \i/\a in {0/90,1/210,2/330}{
          \draw (c\v) + (\a:.5) node[cycle, solid] (A\v\i) {$\A_{\i}^{\v}$}; }}  
    \draw (9,2) node[cycle, solid] (Z) {$Z_*$};

    
     \draw (0,2) node[square] (Sa) {$S^a$};
      
     \draw (Sa.east) edge[very thick, out=30,in=150  ] (Z.west);
     \draw (Sa.south) edge[very thick, out=-90, in=70] (Aa0.north);
     \draw (Sa.south) edge[very thick, out=-140, in=110] (Aa1.north);
     \draw (Sa.south) edge[very thick, out=-30, in=70] (Aa2.north);

     \draw (3,2) node[square] (Sb) {$S^b$};
     \draw (Sb.east) edge[very thick, out=30,in=150  ] (Z.west);
     \draw (Sb.south) edge[very thick, out=-90, in=70] (Ab0.north);
     \draw (Sb.south) edge[very thick, out=-140, in=110] (Ab1.north);
     \draw (Sb.south) edge[very thick, out=-30, in=70] (Ab2.north);

     \draw (6,2) node[square] (Sc) {$S^c$};
     \draw (Sc.east) edge[very thick, out=-30,in=150  ] (Z.west);
     \draw (Sc.south) edge[very thick, out=-90, in=70] (Ac0.north);
     \draw (Sc.south) edge[very thick, out=-140, in=110] (Ac1.north);
     \draw (Sc.south) edge[very thick, out=-30, in=70] (Ac2.north);

\end{tikzpicture}
\caption*{Step $(3)$.}

\begin{tikzpicture}[every node/.style = {vert}]
     \foreach \v/\x/\y in {v/0/0}{
       \draw (\x,\y) node[clump] (c\v) {}; }
     \foreach \v in {v}{
       \foreach \i/\a in {0/90,1/210,2/330}{
          \draw (c\v) + (\a:.5) node[cycle, solid] (A\v\i) {$\A_{\i}^{\v}$}; }}  

     \draw (160:3) node[square] (S01) {$S^v_{0,1}$};
     \draw (S01) + (180:1.5)  node[cycle, solid] (S01Z){}; 
     \draw (S01.west) edge[very thick, out=180, in=-20]  (S01Z.east);
     \draw (S01.east) edge[very thick, dashed, out=-30, in=140] (Av1.west);
     \draw (S01.east) edge[very thick, out=0, in=180] (Av0.west);

     \draw (-160:3) node[square] (S12) {$S^v_{1,2}$};
     \draw (S12) + (180:1.5)  node[cycle, solid] (S12Z){}; 
     \draw (S12.west) edge[very thick, out=180, in=20]  (S12Z.east);
     \draw (S12.east) edge[very thick,  out=20, in=-140] (Av1.west);
     \draw (S12.east) edge[very thick, dashed, out=-30, in=-130] (Av2.south);

     \draw (0:3) node[square] (S20) {$S^v_{0,2}$};
     \draw (S20) + (0:1.5)  node[cycle, solid] (S20Z){}; 
     \draw (S20.east) edge[very thick, out=0, in=-150]  (S20Z.west);
     \draw (S20.west) edge[very thick, dashed, out=190, in=20] (Av2.east);
     \draw (S20.west) edge[very thick,  out=150, in=-20] (Av0.east);

\end{tikzpicture}
\caption*{For every vertex $v$ of $G$. Dashed lines on the gadgets mean that the copy of $Z$ at the end is identified with the labelled copy in the reverse direction.}
\caption*{Step $(4)$.}

\begin{tikzpicture}[every node/.style = {vert}]
     \foreach \v/\x/\y in {a/0/0, b/3/0, c/6/0}{
       \draw (\x,\y) node[clump] (c\v) {}; }
     \foreach \v in {a, b, c}{
       \foreach \i/\a in {0/90,1/210,2/330}{
          \draw (c\v) + (\a:.5) node[cycle, solid] (A\v\i) {$\A_{\i}^{\v}$}; }}  


     \foreach \a/\b/\x in {a/b/1.5, b/c/4.5}{
      \draw (\x,2) node[square] (S0\a\b) {$S_0^{\a\b}$};
      \draw (S0\a\b) + (90:1) node[cycle, solid] (S0\a{\b}Z){};
      \draw (S0\a\b.north) edge[very thick, out=90, in=-90] (S0\a{\b}Z.south);
      \draw (S0\a\b.south) edge[very thick, out=-110, in=90] (A\a0.north);
      \draw (S0\a\b.south) edge[very thick, out=-70, in=90] (A\b0.north);}

     \foreach \a/\b/\x in {a/b/1, b/c/4}{
      \draw (\x,-2) node[square] (S1\a\b) {$S_1^{\a\b}$};
      \draw (S1\a\b) + (-90:1) node[cycle, solid] (S1\a{\b}Z){};
      \draw (S1\a\b.south) edge[very thick, out=-90, in=90] (S1\a{\b}Z.north);
      \draw (S1\a\b.north) edge[very thick, out=110, in=-90] (A\a1.south);
      \draw (S1\a\b.north) edge[very thick, out=70, in=-90] (A\b1.south);}

     \foreach \a/\b/\x in {a/b/2.5, b/c/5.5}{
      \draw (\x,-2) node[square] (S2\a\b) {$S_2^{\a\b}$};
      \draw (S2\a\b) + (-90:1) node[cycle, solid] (S2\a{\b}Z){};
      \draw (S2\a\b.south) edge[very thick, out=-90, in=90] (S2\a{\b}Z.north);
      \draw (S2\a\b.north) edge[very thick, out=140, in=-50] (A\a2.south);
      \draw (S2\a\b.north) edge[very thick, out=30, in=-130] (A\b2.south);}
\end{tikzpicture}

\caption{Steps (2) - (4) of Construction \ref{const:G*} making $G_*$ when $G$ is the path $a,b,c$.  The three figures show what is added in the three steps of the construction. Their union is $\Ga[4]$}\label{fig:mainconst} 
  
\end{figure}

    Observe that the subgraph $\sT$ of $\Gb$ is no longer a tree, but is still connected, as all identifications of gadget-cycles identify the vertices $0$. 
     A cycle of $\Gb$ is a {\em gadget-cycle} if it is a gadget-cycle in one of the gadgets $S$ used in constructing $\Gb$.

  We now prove the main properties of the construction. Recall that $\sH$ is a graph with nontrivial homotopy and $\ZH$ is a 
  nontrivial cycle of $\sH$ of minimum girth.

  \begin{lemma}\label{lem:exactly1}
    For any homomorphism $\phi^*:\Gb \to \sH$ and any $v \in V(G)$, two of the
    values $[\phi^*(\A_i^v)]$ for $i \in [3]$ are $0$, and the other is $[\phi^*(\sZ)]$ (which may also be $0$).
    \end{lemma}
  \begin{proof}
     As the copy $S^v$ of $S_3$ in $\Gb$ connects $\sZ$ with the $\A_i^v$,  we have by (2) of Lemma \ref{gad:sum} that 
          \[ \sum_{i = 0}^2  [\phi^*(\A_i^v)]  = [\phi^*(\sZ)]. \]
     But as $\sZ$ and the $\A_i^v$ are just copies of the minimum  cycle of $\sH$, their images under $\phi^*$ are either $0$, or are generators of $H_1(\DsH)$.
     As $[Z_0]$ is a generator, no two of the $[\phi^*(\A_i^v)]$ can sum to it unless one of them is $0$.  The $S_2$ gadgets introduced in step (3) of Construction \ref{const:G*} imply that no two of the $[\phi^*(\A_i^v)]$ can sum to $0$ unless at least one of them is $0$. So the only possibility left is that two of the $[\phi^*(\A_i^v)]$ are $0$, and the last is $[\phi^*(\sZ)]$, as needed. 

  \end{proof}

  \begin{proposition}\label{prop:reduct1}
    Let $G$ be an irreflexive graph and $\Gb$ be constructed from $G$ by Construction \ref{const:G*}. 
       The following are equivalent.  
  \begin{enumerate}
   \item   There is a  homomorphism  $\phi^*: \Gb \to \sH$ such that $[\phi^*(C)] \neq 0$ for some gadget-cycle $C$ of $\Gb$. 
    \item There is a  homomorphism  $\phi^*: \Gb \to \sH$ such that $[\phi^*(\sZ)] \neq 0$.
   \item There  is a homomorphism $\phi: G \to K_3$.
   \end{enumerate}
  \end{proposition}

  \begin{proof}
   The implication (2) $\Rightarrow$ (1) is clear.  \\

   \noindent {\bf (1) $\Rightarrow$ (2).}  The contra-positive is clear from Lemma \ref{lem:exactly1}.  Indeed, assume that all homomorphisms
    $\phi^*: \Gb \to \sH$ have $\phi^*(\sZ) = 0$. Taking one, Lemma  \ref{lem:exactly1} tells us that $\phi^*(\A^v_i) = 0$ for all end gadget-cycles $\A^v_i$
    in $\Ga[2]$.
    The only other end gadget-cycles in $\Gb$ are those introduced in copies $S$ of $S^2$ in step (3) and (4) of the construction, but for each of these, 
    two of the end gadget-cycles are identified with some $\A^v_i$, so by Lemma \ref{gad:sum} the last end gadget-cycle, $\ZS$, must also have $\phi^*(\ZS) = 0$. 
    For each gadget $S$ used in the construction of $\Gb$ we have shown that $\phi^*(C) = 0$ for all end gadget-cycles $C$, so by part (1) of  Lemma \ref{gad:sum}, this holds for all gadget-cycles $C$ as well.  
    If all homomorphisms have $\phi^*(\sZ) = 0$ then they all have $\phi^*(C) = 0$ on all gadget-cycles.  This gives the claim. \\

     \noindent {\bf (2) $\Rightarrow$ (3).} Assume that there is a homomorphism $\phi^*:\Gb \to \sH$ with
    $[\phi^*(\sZ)] = [Z]$ for some nontrivial  $g$-cycle $Z$ in $\sH$.  For each vertex $v$, Lemma \ref{lem:exactly1} says that $[\phi^*(\A^v_i)] = [Z]$ for exactly one value of $i \in [3]$.  Define $\phi:G \to V(K_3)$ by setting $\phi(v)$ to be this one value. 
     For any edge $uv$ of $G$ we have $\phi(u) \neq \phi(v)$,
    as if $\phi(u) = i = \phi(v)$ then  $\phi^*$ restricted to
    $S_i^{uv} \cong S_2$ has
     $[\phi^*(\A_0)] = [\phi^*(\A_1)] = [Z]$, but then $[\phi^*(\ZS)] = 2[Z]$ by  Lemma \ref{gad:sum}; but this is impossible, as $\ZS$ has girth $g$ so $[\phi^*(\ZS)]$ is $0$ or a generator in $H_1(\DH)$.   Thus $\phi: G \to K_3$ is a homomorphism.  \\

\noindent {\bf  (3) $\Rightarrow$ (2).} Assume that there is a homomorphism $\phi: G \to K_3$. 
We construct a homomorphism $\phi^*: \Ga[1] \to \ZH$, and then extend it to $\Ga[i]$ for $i$ going from $2$ to $5$.  
Define  $\phi^*: \Ga[1] \to \ZH$ as follows. Let $\phi^*$ take $\sZ$ identically to $\ZH$. For each $v$ let  $\phi^*$ take $\A_v^i$ identically to $\ZH$ if $\phi(v) = i$ and otherwise take $\A_v^i$ constantly to $0$. This is clearly a homomorphism as each of these cycles are disjoint and independent.  

 For steps $i = 2$ to $4$, we get $\Ga[i]$ from $\Ga[i-1]$ by introducing new copies of $S_2$ or $S_3$ and identifying their end gadget-cycles to cycles in $\Ga[1]$.
 So in extending $\phi^*$ to these gadgets $S$ it is enough to check that the homomorphisms $\Gamma_i$ from Lemma \ref{gad:sum} which we extend them by agree with $\phi^*$ on the cycles of $\Ga[1]$.   Recall that $\Gamma_{i}$ maps $\ZS$ and $\A_i$ identically to $\ZH$, and maps the other $\A_j$ constantly to $0$.  

 To extend  $\phi^*$ to $\Ga[2]$, let $\phi^*$ be $\Gamma_{\phi(v)}$ on $S^v$ for each vertex $v$ of $G$.  
 To extend this to $\Ga[3]$, for each vertex $u$ of $G$, let the copy $S^u_{i,j}$, for $i < j$, be coloured with the colouring $\Gamma_0$ if $\phi(v) = i$, with colouring
    $\Gamma_1$ if $\phi(v) = j$, and with the constant $0$ colouring otherwise.
  To extend $\phi^*$ to $\Ga[4]$, for each edge $uv$ of $G$ and $i \in [3]$, at most one of $\phi(u)$ and $\phi(v)$
    is $i$.   Let $\phi^*$ be $\Gamma_0$ on $S_i^{uv}$ if
    $\phi(u) = i$,  be $\Gamma_1$ if $\phi(v) = i$, or
    be constant $0$ if neither of these holds.

    All of these definitions agree with $\phi^*$ on the cycles in $\Ga[1]$, so this homomorphism $\Ga[4] \to \ZH$ is well defined. Moreover, on each copy $S$ of the gadgets $S_2$ and $S_3$ in $\Ga[4]$ we have defined $\phi^*$ by one of the homomorphisms $\Gamma_i$ from Lemma \ref{gad:sum}, and these homomorphisms were all constant $0$ on the subgraph $T_S$ of $S$.  So $\phi^*$ is constant $0$ on $T_*$.  Thus we can extend $\phi^1$ to $\Ga[5] = \Ga$ by setting $\phi(v_C) = 0$ for each new vertex $v_C$ introduced in step (5) of the construction.    
  \end{proof}

     To prove Theorem \ref{thm:NF}, we need the following topological fact about our  construction.  

     \begin{proposition}\label{prop:topo}
       The group $H_1(\DG_*)$ has a basis  consisting of end gadget-cycles. 
     \end{proposition}
   
     Hopefully this result is intuitive, but its proof takes considerable work, so we postpone it until the next section.  Before that, we show that the above two propositions 
     give us Theorem \ref{thm:NF}.

      \begin{proof}[Proof of Theorem \ref{thm:NF}] 
        For an irreflexive instance $G$ of $3$-colouring, let $\Gb$ be the graph from Construction \ref{const:G*}. We show that $G$ is $3$-colourable if and only if $\Gb$ has a non-flat homomorphism to $\sH$.  As $3$-colouring is well known to be $\NP$-complete, and $|\Gb| \leq s|\sH|$ where $s$ is a constant depending only on $|S_2|$ and $|S_3|$, so only on the girth of $H$, this shows that $\NF(H)$ is $\NP$-hard.  
        
        Indeed, assume there is a non-flat $\sH$-colouring $\phi$ of $\Gb$. By Proposition \ref{prop:topo} we many assume that $[\phi(C)] \neq 0$ for some end gadget-cycle of $C$ of $\Gb$, so by Proposition \ref{prop:reduct1} there is a $3$-colouring of $G$.
        
        On the other hand, assume that there is a $3$-colouring of $G$. By Proposition \ref{prop:reduct1} there is a homomorphism $\phi: \Ga \to H$ with $\phi(\sZ) \neq 0$; this is non-flat.  
      \end{proof}

        Modulo the proofs of Proposition \ref{prop:topo}, which we will prove next, this completes the proof of Theorem \ref{thm:NF} and so,   as shown in Section \ref{sect:non-flat}, of Theorem \ref{thm:full}.

    \section{Computing the basis of $\Hom_1(\bold{G_*})$}\label{sect:NTbasis}

In this section we prove Proposition \ref{prop:topo}, showing that $H_1(\bold{\Gb})$ is generated by end gadget-cycles. 
In the first subsection we recall standard tools of homology, and prove lemmas that tailor them to our construction.  In the second subsection we will use these lemmas in our main proof.

    \subsection{Topological Tools}\label{sect:Kunneth}

  As the path-of-cycles is the main building block of our construction, we need to know its homology. 
    The notation $\bbP_\ell \times \DsC$ is ambiguous as the product space of the clique complexes $\bbP_\ell$ and $\DsC$ is not the same as clique-complex of the product $P_\ell \times \ZH$. Indeed the product $P_\ell \times C$ is shown on the left of Figure \ref{fig:products} as a tiling of a cylinder with copies of $K_4$, while the product space $\bbP_\ell \times \DsC$ (see \cite{Hatcher} pp. 277-278), shown on the right, is a triangulation of a cylinder.

    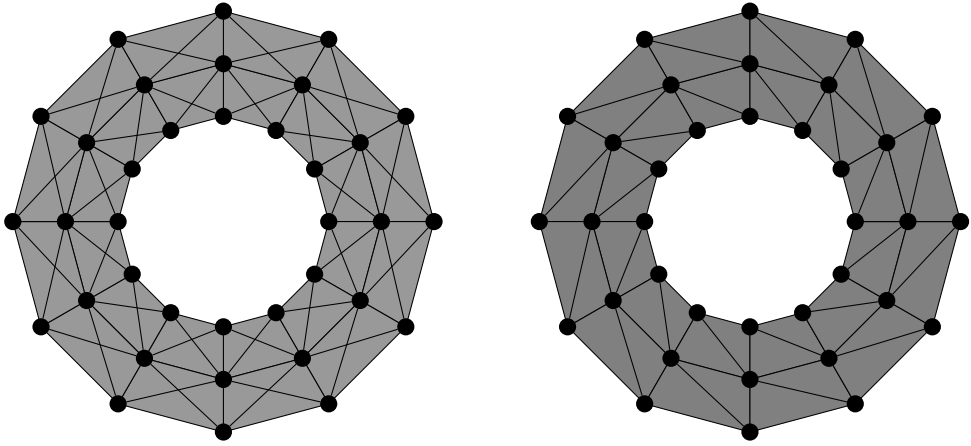
\begin{figure}
        \centering
        
\begin{tikzpicture}[scale=.7, every node/.style = {vert}]
     \begin{pgfonlayer}{fg1}
        \foreach \i[evaluate=\i as \deg using {90 - (30*\i)},
                    evaluate=\i as \ri using {int(Mod(-\i,12))}] in {0,...,11}{
          \draw (\deg:2cm) + (-5,0) node (1\i) {};
          \draw (\deg:3cm) + (-5,0) node (2\i) {};
          \draw (\deg:4cm) + (-5,0) node (3\i) {};
          \draw (\deg:2cm) + (5,0) node (7\ri) {};
          \draw (\deg:3cm) + (5,0) node (6\ri) {};
          \draw (\deg:4cm) + (5,0) node (5\ri) {};
        }  
        \end{pgfonlayer}
        
        \foreach \i[evaluate=\i as \next using {int(Mod(\i+1,12))}] in {0,...,11}{
            \vsquare{1}{2}{\i}{\next}
            \vsquare{2}{3}{\i}{\next} 
            \vhsquare{7}{6}{\i}{\next}
            \vhsquare{6}{5}{\i}{\next}}
         
\end{tikzpicture}  

        \caption{Products of a path and a cycle: on the left is the graph product, on the right is the product as simplicial complexes}
        \label{fig:products}
    \end{figure}
    It is not too hard to see that the triangulation embeds in the $K_4$-tiling, and that, as simplicial complexes, the $K_4$-tiling deformation retracts to the triangulation. 
    So the two simplicial complexes have isomorphic homology groups.  
    For the following calculation, therefore, which says that the basis of $H_1(\bold{P_{\ell} \times C})$ is a single gadget-cycle, we may assume that these two products coincide.  

   \begin{lemma}\label{lem:hcylinder}
     For any integer $\ell \geq 1$ and $d \geq 4$, and any $i \in [\ell]$,
     \[ H_1(\bbP_\ell \times \DsC) = \angles{i \times C} \cong \ZZ. \]  
   \end{lemma}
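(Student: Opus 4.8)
The plan is to use the K\"unneth formula for simplicial complexes, exactly as the surrounding text suggests. Since $\bbP_\ell$ is a contractible complex (it is the clique complex of a reflexive path, which deformation retracts to a point), its reduced homology vanishes: $H_0(\bbP_\ell) \cong \ZZ$ and $H_k(\bbP_\ell) = 0$ for $k \geq 1$. All the homology groups of $\DsC$ are free (indeed $\DsC$ is the clique complex of a girth-$\geq 4$ cycle, so $H_0(\DsC) \cong \ZZ$, $H_1(\DsC) \cong \ZZ$, and $H_k(\DsC) = 0$ for $k \geq 2$), so the $\mathrm{Tor}$ terms in the K\"unneth formula all vanish and we get a clean isomorphism
\[
  H_1(\bbP_\ell \times \DsC) \;\cong\; \bigoplus_{p+q=1} H_p(\bbP_\ell) \otimes H_q(\DsC) \;\cong\; \bigl(H_0(\bbP_\ell)\otimes H_1(\DsC)\bigr) \oplus \bigl(H_1(\bbP_\ell)\otimes H_0(\DsC)\bigr) \;\cong\; \ZZ \otimes \ZZ \;\cong\; \ZZ.
\]
This already gives the abstract isomorphism in the statement; it remains to identify the generator with the class of a single slice $i \times C$.

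For the generator, I would argue as follows. The projection $\pi \colon P_\ell \times \sC \to \sC$ onto the second coordinate is a (reflexive-graph) homomorphism, hence induces a simplicial map $\bbP_\ell \times \DsC \to \DsC$ on clique complexes, and its restriction to any slice $i \times C$ is an isomorphism onto $\DsC$; thus $\pi_*[i \times C] = [C]$ generates $H_1(\DsC) \cong \ZZ$. Consequently $[i \times C]$ is not divisible by any integer $>1$ in $H_1(\bbP_\ell \times \DsC) \cong \ZZ$, so it is a generator, i.e.\ $H_1(\bbP_\ell \times \DsC) = \angles{i \times C}$. That the choice of $i \in [\ell]$ does not matter is precisely Fact~\ref{fact:preshom} (or its specialisation: applying that fact to the identity homomorphism $P_\ell \times \sC \to P_\ell \times \sC$ composed with the inclusion into $\sH$ is not needed — it is cleanest to just note that $[i \times C] = [j \times C]$ in $H_1(\bbP_\ell\times\DsC)$ by the same two-chain $\sum_i c_i$ used in the proof of Fact~\ref{fact:preshom}, applied internally in $\bbP_\ell\times\DsC$ rather than after pushing forward to $\DsH$).

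Alternatively, if one prefers to avoid invoking K\"unneth as a black box, one can compute directly using the deformation retraction mentioned in the text: the $K_4$-tiling of the cylinder deformation retracts onto the standard triangulation of the cylinder $S^1 \times [0,1]$, which in turn deformation retracts onto a single boundary circle, namely a slice. A deformation retraction induces isomorphisms on all homology groups and, being the identity on the retract, sends $[i \times C]$ to itself; since the slice circle is the standard generator of $H_1(S^1) \cong \ZZ$, this gives both the isomorphism and the identification of the generator simultaneously. I would present the K\"unneth argument as the main line and mention the retraction as the geometric intuition.

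The main obstacle is not any deep computation — both routes are short — but rather bookkeeping about which "product" one is working with. As the excerpt already flags, $\bbP_\ell \times \DsC$ denotes the product of simplicial complexes (a triangulated cylinder), not the clique complex of the product graph $P_\ell \times \sC$ (a $K_4$-tiled cylinder), and the K\"unneth formula applies to the former. So the one genuine care-point is to state clearly that we work with the simplicial-complex product (for which K\"unneth is valid) and to cite the already-noted fact that the two complexes are homology-equivalent via the deformation retraction, so that the slice $i \times C$ — which sits inside both — represents corresponding classes. Once that is pinned down, the proof is a two-line application of K\"unneth plus the projection argument for the generator.
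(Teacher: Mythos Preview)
Your proof is correct and follows essentially the same route as the paper: both apply the K\"unneth formula to compute $H_1(\bbP_\ell \times \DsC) \cong \ZZ$, and you in fact give more justification than the paper does (the paper simply writes ``That the generator can be taken as any slice $i \times C$ seems clear'' where you supply the projection argument, and you also make explicit the vanishing of the $\mathrm{Tor}$ terms and the product-versus-clique-complex bookkeeping).
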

   \begin{proof}
     The  K\"unneth formula (see Theorem 3B.7 of \cite{Hatcher}) 
     gives us that 
     \[H_k(\bbP_\ell \times \DsC) = \bigoplus_{i+j=k} H_i(\bbP_\ell) \otimes_{\ZZ} H_j(\DsC) \]
     where $\oplus$ is the direct product of groups and $\otimes_{\ZZ}$ is the tensor product over $\ZZ$. 
   Thus we get 
     \begin{eqnarray*}
       H_1(\bbP_\ell \times \DsC) & = & (H_1(\bbP_\ell) \otimes_{\ZZ} H_0(\DsC))
                                      \oplus   (H_0(\bbP_\ell) \otimes_{\ZZ} H_1(\DsC)) \\
                           & = &  (0 \otimes_\ZZ \ZZ) \oplus (\ZZ \otimes_\ZZ \ZZ) \\
                           & = &  0 \oplus \ZZ = \ZZ,
     \end{eqnarray*}
     as needed. That the generator can be taken as any slice $i \times C$ seems clear. 
   \end{proof}

     Our main tools are excision for dealing with quotients, and Mayer-Vietoris sequences for patching together gadgets.
     For these we will use the {\em reduced homology groups} $\tilde{H}_i(\DG)$, which are defined on page 110 of \cite{Hatcher}. When $i \geq 1$ we have  $\tilde{H}_i(\DG) = H_i(\DG)$, but for $i = 0$ we have  $\tilde{H}_0(\DG) \oplus \ZZ \cong H_0(\DG)$. So whereas the dimension of $H_0(\DG)$ is the number of components of $G$, the dimension of $\tilde{H}_0(\DG)$ is one less than the number of components. Generally we use connected $G$ so $\tilde{H}_0(\DG)$ is usually $0$.
    
     Recall that a sequence of group homomorphisms
     \[
          \dots \to M_{i+2} \goes{\alpha_{i+2}}    M_{i+1} \goes{\alpha_{i+1}}  M_i \goes{\alpha_i} M_{i-1} \to \dots
     \]   
  is exact if $\alpha_{i} \circ \alpha_{i+1} = 0$  for each $i$.  If a we have a short exact sequence of groups
      \[ 0 \to A \to B \to C \to 0, \]
  where $C$ is free, then the sequence splits (see \cite{Hatcher} p. 148), which means that $B = A \oplus C$.

    The following is a special case of the process known as excision. It tells us that when contracting an edge not in a induced $C_4$, our basis is unchanged. 
   
   \begin{lemma}\label{lem:hexcision}
      Let $uv$ be an edge of $G$ that is in no induced copy of $C_4$, and let 
     $G' = G / uv$ be the graph  we get by contracting the edge $uv$. 
      Then $ [C]\mapsto [C / uv]$ is an isomorphism $H_1(\DG) \cong H_1(\DG')$.    
   \end{lemma}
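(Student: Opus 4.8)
The plan is to realize $\DG'$ as the image of $\DG$ under a single simplicial map and to show that this map induces an isomorphism on $H_1$, by matching up Mayer--Vietoris decompositions. Write $x$ for the vertex of $G'=G/uv$ obtained by merging $u$ and $v$, and let $q\colon G\to G'$ be the contraction map (the identity off $\{u,v\}$, sending $u,v\mapsto x$). Then $q$ induces a simplicial map $q\colon\DG\to\DG'$ whose chain map carries $[y_0,\dots,y_n]$ to $[q(y_0),\dots,q(y_n)]$; in particular it sends a $1$-cycle $C=[x_0,x_1]+\dots+[x_\ell,x_0]$ to $C/uv$. So it is enough to prove that $q_*\colon H_1(\DG)\to H_1(\DG')$ is an isomorphism, and then the map in the statement is exactly $[C]\mapsto q_*[C]$.

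The one consequence of the hypothesis that I would extract first is: \emph{if $S$ is a clique of $G$ avoiding $u$ and $v$ with every vertex of $S$ adjacent to $u$ or to $v$, then $S\cup\{u\}$ or $S\cup\{v\}$ is a clique.} Indeed, otherwise there are $w_1\in S$ with $w_1\not\sim v$ and $w_2\in S$ with $w_2\not\sim u$; then $w_1\sim u$, $w_2\sim v$, and $w_1\sim w_2$ since both lie in the clique $S$, while $w_1\ne w_2$ and $w_1,w_2\notin\{u,v\}$, so $\{u,w_1,w_2,v\}$ induces a copy of $C_4$ in $G$ containing the edge $uv$, a contradiction.

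Next I would fix the decompositions. Let $\DG_2$ and $\DG_2'$ be the full subcomplexes of $\DG$ and $\DG'$ spanned by $V(G)\setminus\{u,v\}=V(G')\setminus\{x\}$; these coincide, and $q$ restricts to the identity on them. Let $\DG_1$ be the union of the closed stars of $u$ and of $v$ in $\DG$, and $\DG_1'$ the closed star of $x$ in $\DG'$, so that $\DG=\DG_1\cup\DG_2$, $\DG'=\DG_1'\cup\DG_2'$, and $q(\DG_1)\subseteq\DG_1'$. The closed star of a vertex (the subcomplex generated by the cliques through that vertex) is a cone, hence contractible; the intersection of the closed stars of $u$ and $v$ is, directly from the definitions, the closed star of the edge $uv$, also a cone; so a one-line Mayer--Vietoris gives that $\DG_1$ is connected with $H_1(\DG_1)=0$, exactly as for $\DG_1'$. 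Finally --- and this is the only place the hypothesis is used --- $\DG_1\cap\DG_2$ consists of those simplices avoiding $u$ and $v$ that form a clique with $u$ or with $v$, and by the claim above this subcomplex is exactly $\DG_1'\cap\DG_2'$; moreover $q$ is the identity on it. Since $q$ carries each piece of the first decomposition into the corresponding piece of the second, it induces a commuting ladder between the reduced Mayer--Vietoris sequences of $(\DG;\DG_1,\DG_2)$ and $(\DG';\DG_1',\DG_2')$; on the five consecutive terms around $H_1(\DG)$ every vertical map except $H_1(\DG)\to H_1(\DG')$ is an isomorphism (each is an identity, or a map $0\to0$ together with an identity), so the five lemma yields that $q_*\colon H_1(\DG)\to H_1(\DG')$ is an isomorphism, as required.

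I expect no real obstacle here: once the combinatorial claim is in hand the rest is bookkeeping with standard tools (contractibility of closed stars, Mayer--Vietoris, the five lemma). The one point needing care --- and the precise reason the $C_4$-freeness cannot be dropped --- is the identity $\DG_1\cap\DG_2=\DG_1'\cap\DG_2'$: without the hypothesis this inclusion is proper, the two Mayer--Vietoris ladders no longer match, and the statement genuinely fails, as witnessed by $G=C_4$, $G'=K_3$, where $H_1$ collapses from $\ZZ$ to $0$.
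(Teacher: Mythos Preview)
Your argument is correct. The combinatorial claim is precisely what is needed, and the parallel Mayer--Vietoris decompositions with the five lemma go through as you describe; the only point to be pedantic about is that $q$ genuinely respects the decompositions (i.e.\ $q(\DG_i)\subseteq\DG_i'$), and you have checked this.

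The paper, however, takes a different and shorter route. It observes that since contracting $uv$ creates no new triangles, the quotient complex $\DG/\DE$ (where $\DE$ is the $1$-simplex on $uv$) coincides with $\DG'$ on the $2$-skeleton, and then applies the long exact sequence of the good pair $(\DG,\DE)$: since $\tilde H_1(\DE)=\tilde H_0(\DE)=0$, the quotient map $H_1(\DG)\to H_1(\DG/\DE)=H_1(\DG')$ is an isomorphism. This is a one-line application of a standard fact (contracting a contractible subcomplex with a good neighbourhood leaves homology unchanged), whereas your proof builds the isomorphism by hand. On the other hand, your approach stays entirely within the Mayer--Vietoris toolkit used in the rest of Section~7.2, avoids invoking good pairs and relative homology, and isolates exactly where the $C_4$-freeness enters---namely in the equality $\DG_1\cap\DG_2=\DG_1'\cap\DG_2'$. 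Your closing counterexample ($G=C_4$, $G'=K_3$) is a nice addition that the paper does not include.
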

   \begin{proof}
     As $uv$ is in no $C_4$, contracting it makes no new $K_3$, so no new $2$-simplices.
     Thus, as simplicial complexes, $\DG'$ is exactly the quotient $\DG/\DE$
     where $\DE$ is the simplicial complex of the edge $uv$. 
          
      Take an epsilon neighbourhood in $\DG$ around $\DE$. 
      It consists of $\DE$ and little nubs of edges and some frills of triangles. These
      all deformation retract to $\DE$ so we can apply excision
      (Theorem 2.13 of \cite{Hatcher}) to 
      get that 
    \[  H_1(\DE) \to H_1(\DG) \goes{j_*} H_1(\DG/\DE)  \to  \tilde{H}_0(\DE)   \]
    is exact, where $j_*$ is induced by the quotient $j: \DG \to \DG'$. 
    As $\tilde{H}_0(\DE) = 0$, and $H_1(\DE) = 0$, $j_*$ is an isomorphism $H_1(\DG) \cong H_1(\DG')$ as needed. 
   \end{proof}

    Finally, we recall the basic idea of Mayer-Vietoris sequences, and use it for several lemmas. 
    Given spaces $\DA$ and $\DB$ that intersect in $\DC = \DA \cap \DB$, we can often compute the homology of $\DX = \DA \cup \DB$ using the fact that the following sequence is exact. 
    
    \begin{align}\label{eq:MV}
        \dots \goes{\delta_{n+1}}& H_n(\DC) \goes{\Phi_n} H_n(\DA) \oplus H_n(\DB) \goes{\Psi_n}  H_n(\DX) \goes{\delta_n} 
            H_{n-1}(\DC) \goes{\Phi_{n-1}}  \\ \notag
        \dots \goes{\Psi_1} H_1(\DX) \goes{\delta_1}& 
            \tilde{H}_0(\DC) \goes{\Phi_0} \tilde{H}_0(\DA) \oplus \tilde{H}_n(\DB) \goes{\Psi_0} \tilde{H}_0(\DX) \goes{} 0 
    \end{align}
    where $\Psi(\alpha, \beta) = \alpha - \beta$.  See \cite{Hatcher} p. 149; and p. 150 where it states that \eqref{eq:MV} holds, as given above, for reduced homology.   
    
    This lemma, which we use a couple of times,  is nearly obvious.  But being the most simple use of a Mayer-Vietoris sequence, is a useful example.  
    
    \begin{lemma}\label{lem:hMV0}
      Let $G$ be a connected graph and $a$ and $b$ be vertices of $G$ at distance at least $4$ apart. 
      Where $G+E$ is the graph we get from $G$ by adding the edge $E = ab$, $H_1(\DG + \DE) = H_1(\DG) \oplus \ZZ$.
      Moreover one gets a basis of $H_1(\DG + \DE)$ from one of $H_1(\DG)$ by adding any cycle $C$ containing 
      one copy of the edge $E$. 
    \end{lemma}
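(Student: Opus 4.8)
The plan is first to observe that adjoining the edge $E=ab$ does essentially nothing to the topology of $\DG$ beyond attaching an arc, and then to extract $H_1$ via a Mayer--Vietoris argument in the shape of \eqref{eq:MV}. The combinatorial input is that, since $a$ and $b$ are at distance at least $4$ in $G$ (distance at least $3$ would already suffice for the argument), they have no common neighbour, so $E$ lies in no new clique of size $\geq 3$. Hence, modulo the degenerate $2$-simplices $[a,a,b],[a,b,a],\dots$ that the reflexive convention attaches along $E$ --- these are boundaries, hence irrelevant to $H_1$ by Example~\ref{example:basprop}(2), so one may pass to the honest clique complex --- the space $\DX:=\DG+\DE$ is $\DG$ with a single arc attached at the two distinct points $a,b$. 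One slick finish is to contract a path in $G$ from $a$ to $b$: this turns $\DX$ into $(\DG/\text{path})\vee S^1 \simeq \DG\vee S^1$, whence $H_1(\DX)\cong H_1(\DG)\oplus\ZZ$; but I would spell out the Mayer--Vietoris version, since it also yields the basis statement.

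For that, take $\DA$ to be $\DX$ with the midpoint of the new arc deleted, so $\DA$ deformation retracts onto $\DG$; take $\DB$ a small open neighbourhood of the closed arc $E$, which deformation retracts onto $E$ and is contractible; then $\DC:=\DA\cap\DB$ is the disjoint union of two small half-open arcs, one around $a$ and one around $b$ (disjoint since $a\neq b$), deformation retracting onto two points. Thus $H_1(\DC)=0$, $\tilde{H}_0(\DC)\cong\ZZ$, $\tilde{H}_i(\DB)=0$ for all $i$, $H_i(\DA)\cong H_i(\DG)$, and $\tilde{H}_0(\DG)=0$ since $G$ is connected. Feeding this into \eqref{eq:MV} leaves the exact stretch
\[ 0 = H_1(\DC) \to H_1(\DG)\oplus 0 \to H_1(\DX) \goes{\delta_1} \tilde{H}_0(\DC) \to \tilde{H}_0(\DG)\oplus 0 = 0, \]
i.e.\ a short exact sequence $0\to H_1(\DG)\to H_1(\DG+\DE)\to\ZZ\to 0$, which splits because $\ZZ$ is free: $H_1(\DG+\DE)\cong H_1(\DG)\oplus\ZZ$.

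For the ``moreover'' I would use the explicit description of the connecting map $\delta_1$: it sends the class of a $1$-cycle of $\DX$ to its net crossing number through the arc $E$, viewed in $\tilde{H}_0(\DC)\cong\ZZ$ (write the cycle as a chain in $\DA$ plus a chain in $\DB$ and take the boundary of the $\DB$-part, which lies in $\DC$). So any cycle $C$ of $G+E$ using exactly one copy of the edge $E$ has $\delta_1([C])$ a generator of $\ZZ$. Given a basis $\mathcal B$ of $H_1(\DG)$: by exactness its image spans $\ker\delta_1$, so $\mathcal B\cup\{[C]\}$ spans $H_1(\DG+\DE)$; and it is linearly independent, since any vanishing integral combination maps under $\delta_1$ to the coefficient of $[C]$ times a generator, forcing that coefficient to be $0$, after which injectivity of $H_1(\DG)\hookrightarrow H_1(\DG+\DE)$ kills the rest. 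Hence $\mathcal B\cup\{[C]\}$ is a basis.

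The one genuinely delicate point --- the ``hard part'', such as it is --- is the first step: being sure the degenerate simplices attached along $E$ really do not perturb the homotopy type or the homology computation. That is exactly what Example~\ref{example:basprop} provides, and it is also why the hypothesis reads ``distance at least $4$'' rather than the weaker ``no common neighbour'' that the proof actually uses. Everything downstream is routine diagram chasing in \eqref{eq:MV}.
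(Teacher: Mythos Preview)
Your proof is correct and follows essentially the same Mayer--Vietoris argument as the paper: both obtain the short exact sequence $0\to H_1(\DG)\to H_1(\DG+\DE)\to\ZZ\to 0$, split it using freeness of $\ZZ$, and read off the extra basis element from the connecting map. The only cosmetic difference is that the paper takes the decomposition $\DA=\DG$, $\DB=\DE$, $\DC=\{a,b\}$ directly at the simplicial level, whereas you pass to open thickenings and work topologically; the resulting exact sequence and the basis analysis are identical.
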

    \begin{proof}
      As $a$ and $b$ are distance at least $4$ apart, we make no new triangles by adding $E$.
      So by \eqref{eq:MV} the following is exact. 
     \[ \to  H_1(\{\bba, \bbb\}) \to H_1(\DG) \oplus H_1(\DE) \goes{\Psi} H_1(\DG + \DE) \to \tilde{H}_0(\{\bba, \bbb\}) \to \tilde{H}_0(\DG) \oplus \tilde{H}_0(\DE) \to  \]
     Using $H_1(\{\bba, \bbb\}) = 0$, $H_1(\DE) = 0$ and $\tilde{H}_0(\DG) \oplus \tilde{H}_0(\DE) = 0 \oplus 0 = 0$ we get
     a short exact sequence
     \[ 0 \to  H_1(\DG) \oplus 0 \goes{\Psi} H_1(\DG + \DE) \goes{\delta} \tilde{H}_0(\{\bba, \bbb\}) \to 0. \]
      As $\tilde{H}_0(\{\bba, \bbb\}) = \ZZ$ is free, this splits to give
      $H_1(\DG + \DE) = H_1(\DG)  \oplus \tilde{H}_0(\{\bba, \bbb\}) = H_1(\DG) \oplus \ZZ$
      as needed.
      
      The statement about the basis comes as follows:
      the injection $\Psi$ injects $H_1(\DG)$ as a subgroup
      of $H_1(\DG + \DE)$. By exactness, this is exactly the kernel of $\delta$, so
      one gets a final basis element of $H_1(\DG + \DE)$ from any cycle that $\delta$ maps onto 
      the generator $[b-a]$ of $\tilde{H}_0({a,b})$.  This is any cycle made up of a walk in $G$ with boundary $[b-a]$ and a walk in $E$ with boundary $[a-b]$; so any cycle $C$ in $G$ that contains $E$ once.
    \end{proof}

    The following lemma uses the above, with excision to say that we `pinch' a long basis cycle into $2$ cycles, they both become basis cycles, replacing the original cycle in the basis.     
    This will be used in Step (2) of the construction of the sum gadget. 
     
   \begin{lemma}\label{lem:hMV1} 
     Let $G$ be a graph, $C$ be a nontrivial cycle in $H_1(\DG)$, and $a$ and $b$
     be vertices of $C$ at distance at least $4$ apart. Where $G'$ is the graph we get
     from $G$ by identifying $a$ and $b$, $H_1(\DG') = H_1(\DG) \oplus \ZZ$.
     Moreover, one gets a basis of $H_1(\DG')$ by taking a basis of $H_1(\DG)$, and doing either of:  
     \begin{itemize}
         \item adding one of the new cycles made from $C$ by identifying $a$ and $b$, or 
         \item replacing $[C]$ with the new cycles we get from $C$ by the identification.
     \end{itemize}
   \end{lemma}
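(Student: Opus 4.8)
\emph{Proof plan.} The idea is to realise the identification of $a$ and $b$ as two operations we have already analysed, namely adding the edge $E=ab$ and then contracting it, and then to combine Lemma~\ref{lem:hMV0} with Lemma~\ref{lem:hexcision}. Write $E=ab$ and note that $G'=(G+E)/E$; since $C$ joins $a$ to $b$ we may assume $G$ is connected (adjoining $E$ and contracting it only affect the component containing $C$, and $H_1$ is a direct sum over components). The key point that makes Lemma~\ref{lem:hexcision} applicable to the edge $E$ of $G+E$ is that $E$ lies in no induced $C_4$ of $G+E$: an induced $4$-cycle through $E$ would have the form $a$--$x$--$y$--$b$--$a$ with $ax,xy,yb$ edges of $G$, giving a walk of length $3$ from $a$ to $b$ in $G$ and contradicting $d_G(a,b)\ge 4$. (Equivalently, contracting $E$ creates no new triangle.)

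Next I would chain the two isomorphisms. By Lemma~\ref{lem:hMV0}, $H_1(\DG+\DE)\cong H_1(\DG)\oplus\ZZ$, and a basis of $H_1(\DG+\DE)$ is obtained from a basis of $H_1(\DG)$ by adjoining any cycle of $G+E$ that uses $E$ exactly once. By Lemma~\ref{lem:hexcision} applied to $E$, contraction induces an isomorphism $H_1(\DG+\DE)\goes{\cong}H_1(\DG')$, $[B]\mapsto[B/E]$; here $[B/E]$ is exactly $q_*[B]$, where $q\colon G\to G'$ is the quotient map, since the loop that $E$ becomes is a boundary and contributes $0$ in $H_1(\DG')$. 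Composing gives $H_1(\DG')\cong H_1(\DG)\oplus\ZZ$.

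For the basis statements I would track a basis through the composition. Let $P_1,P_2$ be the two arcs into which $a$ and $b$ cut $C$; since $d_G(a,b)\ge 4$ each $P_i$ is a genuine path, and in $G'$ it closes up to a cycle $C_i:=q(P_i)$, with $C_1,C_2$ edge-disjoint and $q_*[C]=[C_1]+[C_2]$ in $H_1(\DG')$. Fix a basis $\mathcal B$ of $H_1(\DG)$. Applying Lemma~\ref{lem:hMV0} with the extra cycle $D'=P_1\cup E$ (a cycle using $E$ exactly once) and then contracting, and using $D'/E=C_1$, one gets that $q_*(\mathcal B)\cup\{[C_1]\}$ is a basis of $H_1(\DG')$; by symmetry so is $q_*(\mathcal B)\cup\{[C_2]\}$. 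This is the first bullet. For the second bullet, take $\mathcal B$ to contain $[C]$ (possible as $[C]\neq 0$) and write $\mathcal B=\mathcal B_0\sqcup\{[C]\}$; then $q_*(\mathcal B)\cup\{[C_1]\}=q_*(\mathcal B_0)\cup\{[C_1]+[C_2],\,[C_1]\}$ is a basis, and since over $\ZZ$ replacing a basis element $v$ by $v-w$ (with $w$ another basis element) again gives a basis, $q_*(\mathcal B_0)\cup\{[C_2],[C_1]\}$ is a basis of $H_1(\DG')$; this is $\mathcal B$ with $[C]$ replaced by the two new cycles.

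The routine parts are the bookkeeping checks that $D'$ really is a cycle using $E$ once and that $D'/E=C_1$, and the remark on connectedness. The one genuinely delicate point, which I would state explicitly, is the figure-eight phenomenon: when a basis cycle of $G$ (in particular $C$) runs through both $a$ and $b$, its image under $q$ is a \emph{sum} of two cycles of $G'$ rather than a single cycle, so one must argue at the level of homology classes and elementary $\ZZ$-module basis changes rather than naively with cycles.
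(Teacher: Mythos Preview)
Your proof is correct and follows essentially the same route as the paper: add the edge $E=ab$, apply Lemma~\ref{lem:hMV0} to get $H_1(\DG+\DE)=H_1(\DG)\oplus\ZZ$, then contract $E$ via Lemma~\ref{lem:hexcision}, and track the basis using $C_1=P_1\cup E$ and the relation $[C]=[C_1]+[C_2]$. Your explicit check that $E$ lies in no induced $C_4$ (needed for Lemma~\ref{lem:hexcision}) and your remark on the figure-eight phenomenon are good additions that the paper leaves implicit.
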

   \begin{proof}
     Add an edge $E$ between $a$ and $b$. By Lemma \ref{lem:hexcision} and Lemma \ref{lem:hMV0} we get
         \[ H_1(\DG') = H_1(\DG + \DE) = H_1(\DG) \oplus \ZZ, \]
     as needed.  
     
     To get a basis of $H_1(\DG + \DE)$ from that of $H_1(\DG)$ we add any cycle containing $E$ once. So we us the cycle $C_1 = P_1 + E$ where $P_1$ is a path in $C$ from $b$ to $a$.  Moreover, where $C = P_2 + P_1$, we have $[C] = [C_2] + [C_1]$, so we get another basis of $H_1(\DG + \DE)$ by removing $[C]$ and adding $[C_2]$.  
     
     Under the the contraction of $E$ to $\DG'$, $C_1$ and $C_2$ become the cycles we get from $C$ by identifying $a$ and $b$, as needed.   
    \end{proof}

    The following says that if we identify end gadget-cycles of disjoint gadgets then,
    assuming that the identified gadget-cycle is in a basis for both gadgets, 
    our basis in the new graph is the union of the bases of the two gadgets.  (The identified gadget-cycle only appears once in the new basis.)
   
   \begin{lemma}\label{lem:hjoin1}   
     Let $F$ and $H$ be disjoint connected graphs and let $C_F$ and $C_H$ be
     isomorphic nontrivial $1$-cycles of $F$ and $H$ respectively.
     Let $G$ be the graph we get from $F$ and $H$ by identifying $C_F$ and $C_H$.
     Then
     \[ H_1(\DG)  = (H_1(\DF) \oplus H_1(\DH))/H_1(\DC_\DF).\]
     (This is '$=$', not just '$\cong$', so one gets a basis of $H_1(\DG)$ from
     the union of bases of $H_1(\DF)$ and $H_1(\DH)$, by removing the repeated basis element $[C_F]$. )       
   \end{lemma}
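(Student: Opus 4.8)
The plan is to compute $H_1(\DG)$ from a single Mayer--Vietoris sequence for the decomposition $\DG = \DF \cup \DH$, taking the two pieces to be $\DF$ and $\DH$ themselves and their intersection to be $\DC_\DF$. The first thing to pin down is precisely that: inside $\DG$ one really has $\DF \cap \DH = \DC_\DF$ and $\DG = \DF\cup\DH$. The key point is that identifying $C_F$ with $C_H$ creates no new cliques, because no edge of $G$ joins a vertex of $V(F)\setminus V(C_F)$ to a vertex of $V(H)\setminus V(C_H)$; hence no clique of $G$ can meet both sides, so every simplex of $\DG$ lies in $\DF$ or in $\DH$, and those lying in both are exactly the cliques inside $V(C_F)$, which — as $C_F$ is, in all our applications, an induced copy of $\sC$ — form $\DC_\DF$. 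Moreover $\DC_\DF$ is the clique complex of a reflexive cycle of girth at least $4$, so by Example~\ref{example:basprop} it is connected with $\tilde H_0(\DC_\DF) = 0$ and $H_1(\DC_\DF) \cong \ZZ$ generated by $[C_F]$.

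Next I would feed this into \eqref{eq:MV}. The relevant exact segment is
\[ H_1(\DC_\DF) \goes{\Phi_1} H_1(\DF)\oplus H_1(\DH) \goes{\Psi_1} H_1(\DG) \goes{\delta_1} \tilde H_0(\DC_\DF), \]
with $\Phi_1(z) = (z,z)$ (the inclusions of $\DC_\DF$ into $\DF$ and into $\DH$ being literal) and $\Psi_1(\alpha,\beta) = \alpha-\beta$. Since $\tilde H_0(\DC_\DF) = 0$, the map $\Psi_1$ is surjective, and $\ker\Psi_1 = \operatorname{im}\Phi_1$ by exactness, so $\Psi_1$ descends to an isomorphism $(H_1(\DF)\oplus H_1(\DH))/\operatorname{im}\Phi_1 \xrightarrow{\cong} H_1(\DG)$; because this is induced throughout by literal inclusions of cycles, it is the identity on representatives, which is exactly what licenses writing ``$=$'' rather than ``$\cong$''. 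The one step requiring care is to see that $\operatorname{im}\Phi_1$ is a faithful copy of $H_1(\DC_\DF)$, i.e.\ that $z\mapsto(z,z)$ is injective on $H_1(\DC_\DF)\cong\ZZ$; this holds because $[C_F]$ is a nonzero element of the free group $H_1(\DF)$ (as it is at every stage where we apply this lemma) and so has infinite order.

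For the basis assertion I would just carry out the evident change of basis. Fix $\NT$-bases $\mathcal B_F$ of $F$ and $\mathcal B_H$ of $H$ with $[C_F]\in\mathcal B_F$ and $[C_H]\in\mathcal B_H$; then $\{(b,0):b\in\mathcal B_F\}\cup\{(0,b):b\in\mathcal B_H\}$ is a basis of $H_1(\DF)\oplus H_1(\DH)$. Replacing $([C_F],0)$ by $([C_F],[C_H]) = ([C_F],0)+(0,[C_H])$ gives another basis whose first vector generates $\operatorname{im}\Phi_1$, so quotienting by $\operatorname{im}\Phi_1$ leaves $\{(b,0):b\in\mathcal B_F\setminus\{[C_F]\}\}\cup\{(0,b):b\in\mathcal B_H\}$ as a basis of the quotient; transporting this along the isomorphism above and recalling that $[C_H]$ and $[C_F]$ are the same cycle of $G$, it is exactly $\mathcal B_F\cup\mathcal B_H$ with one copy of $[C_F]$ removed, and all its members are cycles of $G$.

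The topological content is the single Mayer--Vietoris sequence, so there is no deep obstacle; the two places that genuinely need checking are the ones flagged above — that the gluing introduces no new simplices (so that $\DF\cap\DH$ is exactly $\DC_\DF$ and MV applies in clean form), and that $\Phi_1$ is injective (so that we are quotienting by $H_1(\DC_\DF)$ itself and not by a proper quotient of it).
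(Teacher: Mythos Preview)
Your proof is correct and follows essentially the same Mayer--Vietoris approach as the paper. You are in fact more careful than the paper in two places: you explicitly justify why $\DF\cap\DH=\DC_\DF$ inside $\DG$ (no new cliques arise from the gluing), and you argue injectivity of $\Phi_1$ directly from $[C_F]$ having infinite order in $H_1(\DF)$, whereas the paper routes this through the connecting map $\delta\colon H_2(\DG)\to H_1(\DC_\DF)$ being zero.
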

   \begin{proof}
     By \eqref{eq:MV}, the sequence
     \[ \dots \to H_2(\DG) \goes{\delta} H_1(\DC_\DF) \goes{\Phi} H_1(\DF) \oplus H_1(\DH) \goes{\Psi}
        H_1(\DG) \to \tilde{H}_0(\DC_\DF) \to \dots \]
     where $\Phi(\alpha) = (\alpha, -\alpha)$, 
     is exact.
      As $\tilde{H}_0(\DC_\DF) = \emptyset$, the map $\Psi$, which is isomorphic on
      both $H_1(\DF)$ and $H_1(\DH)$, is surjective. So elements of
      $H_1(\DF)$ and $H_1(\DH)$ generate $H_1(\DG)$.
      
      As $C_F$ is non-zero in $H_1(\DG)$ it is not the boundary of anything in $G$,
      so $\delta$  maps everything to $0$. Thus by exactness, $\Phi$ is injective,
      and the kernel of $\Psi$ is the element $(C_F,-C_F) = (C_F, - C_H)$, identifying
      $C_F$ and $C_H$.    
      So
      $H_1(\DG) = (H_1(\DF) \oplus H_1(\DH))/H_1(\DC_\DF)$,
      as needed.

    \end{proof}

    The following lemma deals with the situation that we identify two gadget-cycles of a connected graph.  In doing so, one of the identified gadget-cycles, both of which can be assumed to be basis cycles, becomes dependent on the other, so falls out of the basis unless they were already dependent; at the same time, we create a new basis cycle that we can take to be in $\sT$. 

   \begin{lemma}\label{lem:hMV2}
     Let $A$ and $B$ be disjoint nontrivial copies of $Z$ in a graph $G$, and let $\ccB$ be a basis of $H_1(\DG)$ containing $[A]$ and $[B]$.
     Let $G'$ be the graph that we get from $G'$ by identifying $A$ and $B$.   
     Let $C$ be any path of $G$  that becomes a cycle upon this identification.   
     \begin{enumerate}
          \item If $[A]$ and $[B]$ are independent in $H_1(\DG)$ then  
     \[ H_1(\DG') = \angles{ (\ccB \setminus \{B\}) \cup \{C\} }. \]
          \item If $[A] = -[B]$ (as might occur if we identify the cycles backwards) then 
      \[ H_1(\DG') = \angles{ (\ccB) \cup \{C\} }. \]
          \item If $[A] = [B]$ then 
            \[ H_1(\DG') = \angles{ (\ccB) \cup \{C\} }, \]
          and the element $[A]$ becomes $2$-torsion if it was not already.
      \end{enumerate}
      \end{lemma}
   \begin{proof}
     Let $E$ be a copy of $P_\ell \times A$ for $\ell \geq 4$, and let
     $G + E$ be the graph we get by identifying $\{0\} \times A$ in $E$ with $A$ and
     $\{\ell\} \times A$ in $E$ with $B$.  
     Using a similar trick to the one we used in Lemma \ref{lem:hMV1} we prove the result for $G+E$ instead of for $G'$.  Since we can get from $G+E$ to $G'$ by contracting lengthwise edges of $E$ one at a time, this is enough, by Lemma \ref{lem:hexcision}.

     As the intersection of $G$ and $E$ is $A \cup B$, the sequence
     \begin{eqnarray*}
     \goes{} H_1(\DA \cup \DB) & \goes{\Phi_1} & H_1(\DG) \oplus H_1(\DE)
                                             \goes{\Psi_1}  H_1(\DG+\DE) \\
        &\goes{\delta_1}& \tilde{H}_0(\DA \cup \DB) \goes{\Phi_0} \tilde{H}_0(\DG)\oplus \tilde{H}_0(\DE) \to \dots 
     \end{eqnarray*}
      from \eqref{eq:MV}, which quickly becomes
     \[ \to  H_1(\DA \cup \DB)  \goes{\Phi}  H_1(\DG) \oplus H_1(\DE) \goes{\Psi}  H_1(\DG+\DE) 
       \goes{\delta} \ZZ \to 0, \] 
     is exact. 
     
     Here $\Phi: \alpha \mapsto (\alpha, -\alpha)$ and
     $\Psi: (\alpha, \beta) \mapsto \alpha + \beta$. So we see that, $\Psi(H_1(\DG) \oplus H_1(\DE)) = \Psi(H_1(\DG) \oplus \{0\})$.
      Indeed, $H_1(\DE)$ has only the one non-zero element $[A] = [B]$
      and so for any $[C] \in H_1(\DG)$,
      \[ \Psi( ([C],[A])) = [C] + [A] = \Psi([C] + [A], 0). \]     
      
     With this we can show that the image of $\Psi$ is the image of its restriction $\Psi_r$ to $H_1(\DG) \oplus \{0\}$. Indeed, the kernel of $\Psi_r$
      is, by exactness 
      \begin{eqnarray*}
       \ker(\Psi) \cap ( H_1(\DG) \oplus \{0\}) & = & \img(\Phi) \cap (  H_1(\DG) \oplus \{0\}) \\
        & = &  \{ (\sigma, 0) \mid [\sigma] = 0 \mbox{ in } H_1(\DE) \}\\
        & = & \angles{( [A - B], 0 )}.  
      \end{eqnarray*}

      So $\Psi_r$, and so $\Psi$, injects $H_1(\DG) / \angles{[A - B]}$ into $H_1(\DG+\DE)$.  
      On the other hand, by exactness,  $\delta$ surjects onto $\ZZ$ so has co-rank of $1$, so $H_1(\DG+\DE)/\img(\Psi_r) \iso \ZZ$.  As $\ZZ$ is free, we get
      \[ H_1(\DG + \DE) \iso \img(\Psi_r) \oplus \ZZ = \frac{H_1(\DG)}{\angles{[A-B]}} \oplus \ZZ. \]

    The factor $\ZZ$ is generated by $[C]$ as $C$ is clearly  independent of anything in $H_1(\DG)$.   If $[A]$ and $[B]$ are independent in $H_1(\DG)$ then $A-B$ is non-zero and the factor $H_1(\DG)/\angles{[A-B]}$ is generated by the basis $\ccB \setminus \{[B]\}$; this gives part (1) of the lemma. 
     If $[A] = [B]$ then this factor is $H_1(\DG)$ so its basis is still $\ccB$.  If $[A] = -[B]$ it is then $H_1(\DG)/\angles{2[A]}$ and the basis is unchanged, but the order of $[A]$ is reduced to $2$.

    \end{proof}

    This final lemma says that if we `plug' a basis cycle $C$ in a graph with a vertex $v_C$ adjacent to all of its vertices, as we do in step (5) of Construction \ref{const:G*}, then $C$ becomes contractible and falls out of the basis. 
    
    \begin{lemma}\label{lem:plug}
     Let $C$ be a nontrivial cycle in a basis $\ccB$ of $H_1(\DG)$, and let $G'$ be the graph we get from $G$ by adding a new vertex $v_C$ adjacent to all vertices in $C$.
     The set  $\ccB \setminus \{ [C] \}$ is a basis of $H_1(\DG')$.
    \end{lemma}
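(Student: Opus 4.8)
The plan is to present $\DG'$ as a union $\DG \cup \bbW$ in which $\bbW$ is a contractible ``cone'' over the cycle $C$ and $\DG \cap \bbW$ is exactly $C$, and then to extract $H_1(\DG')$ from the Mayer--Vietoris sequence \eqref{eq:MV}, exactly as in Lemmas \ref{lem:hMV0}--\ref{lem:hjoin2}. Write $C = (x_0, x_1, \dots, x_\ell, x_0)$ and let $\bbW$ be the clique complex of the subgraph of $G'$ induced by $V(C) \cup \{v_C\}$. Since $v_C$ is adjacent in $G'$ to every $x_i$ and to nothing else, $v_C$ is a universal vertex of this subgraph, so $\bbW$ is a cone with apex $v_C$ and hence is contractible: $H_1(\bbW) = 0$ and $\tilde H_0(\bbW) = 0$. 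Moreover every clique of $G'$ containing $v_C$ has the form $\{v_C\}\cup Q$ with $Q$ a clique of $G$ inside $V(C)$, so $\DG' = \DG\cup\bbW$ and $\DG\cap\bbW$ is the clique complex $\DC$ of the subgraph $G[V(C)]$; and when $C$ is chordless in $G$ --- as it is for the cycles we plug (gadget-cycles and the cycles of an $\NT$-basis of $\sT$, all of which are induced) --- we have $G[V(C)] = C$, so $\DG\cap\bbW = \DC$ is simply the clique complex of the cycle $C$, with $H_1(\DC) = \angles{[C]}\cong\ZZ$ and $\tilde H_0(\DC) = 0$.

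Feeding $\DA = \DG$, $\DB = \bbW$, $\DC = \DG\cap\bbW$, $\DX = \DG'$ into \eqref{eq:MV} and cancelling the zero terms $H_1(\bbW)$ and $\tilde H_0(\DC)$ leaves the exact sequence
\[ H_1(\DC) \goes{\Phi} H_1(\DG) \goes{\Psi} H_1(\DG') \to 0, \]
where $\Phi$ is induced by the inclusion $\DC\hookrightarrow\DG$, so $\Phi([C]) = [C]$. Hence $\Psi$ is surjective with kernel $\img\Phi = \angles{[C]}$ --- this image is all of $\angles{[C]}$, not $0$, because $[C]$ is a basis vector of the free group $H_1(\DG)$ and so has infinite order --- and therefore $H_1(\DG') \cong H_1(\DG)/\angles{[C]}$.

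To finish, recall that $[C]$ belongs to the $\NT$-basis $\ccB$ of $H_1(\DG)$; since $H_1(\DG)$ is free, the images of $\ccB\setminus\{[C]\}$ form a basis of $H_1(\DG)/\angles{[C]}$. As $\Psi$ is induced by the inclusion $G\hookrightarrow G'$, it carries every $1$-cycle of $G$ to the identically described $1$-cycle of $G'$, so it identifies $H_1(\DG')$ with $\angles{\ccB\setminus\{[C]\}}$; that is, $H_1(\DG') = H_1(\DG)\setminus\{[C]\}$, as claimed. The step I expect to require the most care is the identification $\DG\cap\bbW = \DC$ with $\DC$ the complex of the cycle $C$: this is exactly where chordlessness of $C$ enters, since a chord $x_ix_j$ of $C$ would create the extra triangle $\{v_C,x_i,x_j\}$ in $G'$, enlarging $\DG\cap\bbW$, and then the image of $\Phi$ could exceed $\angles{[C]}$ and the displayed conclusion would fail.
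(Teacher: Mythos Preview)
Your proof is correct and follows essentially the same route as the paper: decompose $\DG'$ as $\DG\cup\bbW$ with $\bbW$ the wheel on $V(C)\cup\{v_C\}$, observe $H_1(\bbW)=0$, and read off $H_1(\DG')\cong H_1(\DG)/\angles{[C]}$ from the Mayer--Vietoris sequence. You are in fact more careful than the paper on one point: the paper simply asserts $H_1(\DC)=\angles{[C]}$ for the intersection, whereas you note that this identification requires $C$ to be chordless in $G$ and check that the cycles actually plugged in Construction~\ref{const:G'} have this property.
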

    \begin{proof}
    
      Let $W$ be the wheel in $G'$ induced by $V(C) \cup \{v_C\}$.  It is easy to see that $H_1(\bbW) = 0$, and so by \eqref{eq:MV}, the sequence
       \[ \goes{\delta} H_1(\DC) \goes{\Phi} H_1(\DG) \oplus 0 \goes{\Psi} H_1(\DG') \to 0 \]
       is exact. So $\Psi$ surjects $H_1(\DC)$ onto $H_1(\DG')$. As $H_1(\DC)$ is clearly $\angles{[C]}$, and so the image of $\Phi$ is $\angles{ ([C],0)}$, this is the kernel of $\Psi$.  So 
        \[ H_1(\DG') \iso H_1(\DG)/\angles{[C]}. \]
    \end{proof}

   \subsection{Computing bases of $H_1(\bold{S_s})$ and $H_1(\bold{G_*})$}\label{sect:FinalTop}
   
    Throughout this section, it is convenient to refer to a basis of $H_1(\bold{G})$ for a graph $G$ simply as a basis of $G$.
    We prove Proposition \ref{prop:topo}, which says that $\Gb$ has a basis consisting of  end gadget-cycles.  We start by proving the same for the sum gadget $S_s$ of 
    Construction \ref{const:sum}. 

   \begin{lemma}\label{lem:SiBasis}
     For any $s \geq 2$, $H_1(\bold{S_s})$ has a basis consisting of any $s$ of the $s+1$ end gadget-cycles:  $\A_0, \A_1, \dots \A_{s-1}$ and $\ZS$.   
   \end{lemma}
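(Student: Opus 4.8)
The plan is to compute the first homology of $S_s$ by following the three steps of Construction \ref{const:sum}, tracking an $\NT$-basis via the excision lemma \ref{lem:hexcision} and the Mayer--Vietoris lemmas \ref{lem:hMV1} and \ref{lem:hjoin1}, and then reading off the claimed bases from the resulting presentation. Throughout we use that $\sC$ has girth $g\geq 4$ and $\sB$ has girth $sg\geq 8$, and that $\ell=\ell_s\geq 2$, so that in $\ccP=P_\ell\vtimes\sB$ the slices $0$ and $\ell$ are distinct and non-adjacent and slice $1$ differs from slice $\ell$. The starting point is $\ccP$ itself: by Lemma \ref{lem:hcylinder}, $H_1(\bbP_\ell\times\bbB)$ is free of rank $1$ and any single slice is an $\NT$-basis; in particular $[0\times\sB]=[\ell\times\sB]$ generates it.

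\emph{Step (1).} Since $\sB$ is reflexive, identifying two adjacent vertices is the same as contracting the edge between them, so collapsing each run $\{(\ell,js),\dots,(\ell,js+s-1)\}$ in the slice $\ell\times\sB$ amounts to contracting the $s-1$ edges of a path, which we do one edge at a time. One checks that each edge contracted in this process lies in no induced $C_4$ of the current graph: this uses that $\sB$ has girth $\geq 8$, that the slice $(\ell-1)\times\sB$ is never modified, and that $g\geq 4$ keeps the already-collapsed runs far apart. Hence Lemma \ref{lem:hexcision} applies at every contraction, carrying each $[C]$ to $[C/uv]$, so $H_1$ is unchanged. Writing $q_1\colon\ccP\to S_s^{(1)}$ for the composite quotient, which fixes $0\times\sB$ and carries $\ell\times\sB$ onto $Z$, we conclude that $H_1(S_s^{(1)})$ is free of rank $1$, with $\NT$-basis either $\{0\times\sB\}$ or $\{Z\}$, and with $[Z]=[0\times\sB]$.

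\emph{Step (2).} The $s$ vertices $(0,0),(0,g),\dots,(0,(s-1)g)$ to be identified all lie on the cycle $0\times\sB$; identify them one at a time, at the $m$-th step ($1\leq m\leq s-1$) merging $(0,mg)$ into the vertex already formed. An easy induction shows that just before the $m$-th step an $\NT$-basis is $\{A'_0,\dots,A'_{m-2},C^{(m-1)}\}$, where $C^{(m-1)}$ is the sub-arc of $0\times\sB$ running from $(0,(m-1)g)$ around to $(0,0)$ and $A'_0,\dots,A'_{m-2}$ are the $g$-cycles already created. The vertex $(0,mg)$ lies on $C^{(m-1)}$, and it is at distance at least $4$ in the current graph from the merged vertex, since the two sub-arcs of $C^{(m-1)}$ that it cuts off both have length $g\geq 4$ and any shortcut would have to dip through the unmodified slice $1\times\sB$, which is no faster. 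So Lemma \ref{lem:hMV1}, in its ``replace $[C]$'' form with $C=C^{(m-1)}$, splits $C^{(m-1)}$ into the $g$-cycle $A'_{m-1}$ and a shorter arc $C^{(m)}$ with $[C^{(m-1)}]=[A'_{m-1}]+[C^{(m)}]$. After $s-1$ steps $C^{(s-1)}$ has length $g$ and is exactly $A'_{s-1}$, so $H_1(S_s^{(2)})$ is free of rank $s$ with $\NT$-basis $\{A'_0,\dots,A'_{s-1}\}$; chaining the relations above (together with $[Z]=[0\times\sB]$ from Step (1), $Z$ being untouched here) gives $[Z]=\sum_{i\in[s]}[A'_i]$.

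\emph{Step (3) and conclusion.} Attaching each new copy of $P_1\times\sC$ is an instance of Lemma \ref{lem:hjoin1}, taking $F$ to be the current graph, $H=P_1\times\sC$ (whose $H_1$ is free of rank $1$, generated by either slice by Lemma \ref{lem:hcylinder}), $C_F=A'_i$, and $C_H$ the glued slice of $H$; since $A'_i$ and $A_i$ are the two slices of the attached annulus they are homologous, so the effect is simply to replace $A'_i$ by $A_i$ in the basis. Hence $H_1(S_s)$ is free of rank $s$ with $\NT$-basis $\{A_0,\dots,A_{s-1}\}$, subject only to the relation $[Z]=\sum_{i\in[s]}[A_i]$. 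In a free abelian group of rank $s$, any generating set of size $s$ is a basis; so $\{A_0,\dots,A_{s-1}\}$ is a basis, and for each $j$ so is $\{A_i:i\neq j\}\cup\{Z\}$, since it has size $s$ and generates because $[A_j]=[Z]-\sum_{i\neq j}[A_i]$. These are exactly the $s$-element subsets of the $s+1$ end gadget-cycles, which proves the lemma. The main thing to be careful about is the verification of the geometric hypotheses---``no induced $C_4$'' in Step (1) and ``distance at least $4$'' in Step (2)---which has to be re-examined after each identification; in every case it reduces to the girth bounds $g\geq 4$, $sg\geq 8$ and to the far end of $\ccP$ being left alone while one works at a single end.
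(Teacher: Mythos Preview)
Your proof is correct and follows essentially the same route as the paper's: start from Lemma~\ref{lem:hcylinder} for $\ccP$, apply Lemma~\ref{lem:hexcision} edge-by-edge through Step~(1), apply Lemma~\ref{lem:hMV1} through the pinchings of Step~(2) to obtain the basis $\{A'_0,\dots,A'_{s-1}\}$, and use Lemma~\ref{lem:hjoin1} in Step~(3) to swap each $A'_i$ for $A_i$. The only cosmetic difference is in the last sentence: the paper invokes Lemma~\ref{gad:sum} to see that $Z$ can replace any $A_j$, whereas you track the relation $[Z]=\sum_i[A_i]$ explicitly through the computation and then use that in a free abelian group of rank $s$ any generating $s$-set is a basis; this is the same fact reached by a slightly more self-contained path.
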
  
   \begin{proof}
     Referring to Construction \ref{const:sum}, in which we constructed $S_s$, we started with 
     $S_s^{(0)} = P_\ell \times \sB$, which by Lemma \ref{lem:hcylinder}  has a basis consisting of either of its end gadget-cycles. 
     
     In step (1) we take a quotient of one of these end gadget-cycles by subpaths.  Applying Lemma \ref{lem:hexcision} one edge at a time, we have that the resulting graph $S_s^{(1)}$ still has a basis consisting of either of its end gadget-cycles.   
     
     In step (2) we construct $S_s^{(2)}$ from $S_s^{(1)}$ by pinching several vertices together in the other end gadget-cycle $0 \times \sB$.  As $0 \times \sB$ can be assumed to be a basis cycle of $S_s^{(1)}$,  we can apply Lemma \ref{lem:hMV1} each time we do this, and conclude that the resulting cycles $\A_0', \dots, \A_{s-1}'$ make up a basis of $S_s^{(2)}$. 
     
     For each attachment, in step (3), of a path-of-cycles $P_1 \times Z$ to $\A_i'$, 
     we apply Lemma \ref{lem:hjoin1}, with $F = P_1 \times Z$ and with $H$ containing $C_H = \A_i'$; this allows us to replace $\A'_i$ with $\A_i$ in a basis. Thus the $\A_i$ make up a basis of $S_s = S_s^{(3)}$.   
     
     Lemma \ref{gad:sum} tells us that the end gadget-cycle $\ZS$ is independent of any $s-1$ of these $\A_i$, so can replace any one of them in the basis. 
   \end{proof}

   We are now ready to attack the main construction and show that $\Gb$ has a basis consisting of end gadget-cycles.   
   
   \begin{proof}[Proof of Proposition \ref{prop:topo}]
   
     In Step (1) of Construction \ref{const:G*} we introduce a set of $3n+1$ disjoint copies
     of $Z$. Indeed, letting $V(G) = [n]$ and, for $i \in [n]$, letting $\ccB^i = \{ \A^i_0, \A^i_1, \A^i_2 \}$, we introduce the set 
       \[ \ccB := \bigcup_{i \in [n]} \ccB^i  \cup \{ \sZ \} \]
     of copies of $Z$.    
     
     In Step (2) we construct a graph $\Ga[2]$, the end gadget-cycles of which are the cycles in the set $\ccB$, as follows. We start with a copy $G_1:= S^1_3$ of $S_3$ and rename its copy of $\ZS$ to $\sZ$.   For $i = 2, \dots, n$, we construct $G_i$ from $G_{i-1}$ by taking a new copy $S^i_3$ of $S_3$ and identifying its copy of $\ZS$ with $\sZ$.  The resulting $G_n$ is $\Ga[2]$.  
     
     Now, by Lemma \ref{lem:SiBasis}, we can assume that the copy of $\ZS$ in each of the $S^i_3$ is in its basis, so applying Lemma \ref{lem:hjoin1} when we construct $G_{i+1}$ from $G_i$ we get a basis of
     $G_{i+1}$ by adding any two of the cycles in $\ccB^{i+1}$ to our basis of $G_i$.  It follows that for any choice of two cycles in $\ccB^i$ for each $i$, there is a basis of $\Ga[2]$ containing these cycles.  
     At this point $\sT$ is still a tree, so has an empty basis $\ccB_T$.  The following statement is therefore true:
     \begin{quote}
         For any two cycles $C, C'$ in $\ccB$ there is a basis $\ccB^*$ of $\Ga[2]$ with 
          \begin{equation}\label{eq:basis} \{C, C'\} \cup \ccB_T \subset \ccB^* \subset \ccB \cup \ccB_T. \end{equation}
     \end{quote}

     In Steps (3) and (4) we construct $\Ga[4]$ from $\Ga[2]$ by taking several new copies of $S_2$ and attaching them to $\Ga[2]$ by identifying their copies of $\A_0$ and $\A_1$ with cycles of $\Ga[2]$ in $\ccB$. Applying the following claim each time we do this, we add a new cycle of $\sT$ to $\ccB_T$ and verify that \eqref{eq:basis} holds for the newly constructed graph in place of $\Ga[2]$.  
     
     \begin{claim*}
      Let $G$ be a graph and $C$ and $C'$ be disjoint copies of $Z$ in a basis $\ccB$ of $G$.  Let $T$ be a connected subgraph of $G$ with one vertex in each of $C$ and $C'$.  Let $G'$ be the graph we get from $G$ and a disjoint copy $S$ of $S_2$ by identifying $C$ with $\A_0 \leq S$ and $C'$ with $\A_1 \leq S$ in such a way that the tree $T_S$ of $S$ has one leaf identified with the vertex of $T$ in $C$ and one other leaf the vertex of $T$ in $C'$.  Let $T'$ be the union of $T$ and $T_S$.  
      
      Where $\ccB_T$ is a basis of $T$, there is a cycle $C_T$ in $T'$ such that 
       \begin{enumerate}
           \item $\ccB_T \cup \{C_T\}$ is a basis of $T'$, and 
           \item $\ccB \cup \{C_T\}$ is a basis of $G'$.
       \end{enumerate}
     \end{claim*}
     \begin{proof}\claimproof
     Starting with $G$ and the copy $S$ of $S_2$, first identify $\A_0$ in $S$ with the cycle $C$ of $G$.
     Call this graph $G^{(1)}$. 
     By Lemma \ref{lem:hjoin1}, we get a basis $\ccB^{(1)}$ of $G^{(1)}$ by removing $C$ from $\ccB$ and adding the cycles $\A_1$ and $\ZS$ of $S$.  The basis of $T$ has not changed, as we have just attached a tree $T_S$ to some vertex.  
     
     Now construct $G'$ by identifying the cycles $\A_1$ and $C'$ of $G^{(1)}$.  This makes $T'$ by identifying vertices of
     $T$ that are distance at least $4$ apart, so by Lemma \ref{lem:hMV1} creates a new cycle $C_T$ in $T'$ such that 
     $\ccB_T \cup \{C_T\}$ is a basis of $T'$.  
     By Lemma \ref{lem:hMV2}, we can get a basis of $G'$ from $\ccB^{(1)}$ by adding the cycle $C_T$ and possibly  removing the cycle $C'$.  This basis contains $\ZS$ and $\A_1$ of $S$. But in $S$, the cycles $\ZS, \A_1,$ and $\A_0$ are minimally dependent,
     so we can replace $\ZS$ with $\A_0 = C$ in this basis, getting that $\ccB \cup \{C_T\}$ is a basis of $G'$.
     \end{proof} 
     
     As mentioned before the claim, this gives us that statement \eqref{eq:basis}  holds for $\Ga[4]$ in place of $\Ga[2]$.  In particular $\Ga[4]$ has a basis consisting of gadget end-cycles, and cycles in $\ccB_T$. To get $\Gb = \Ga[5]$ from $\Ga[4]$ we added, for every cycle $C$ in $\ccB_T$, a vertex $v_C$ with edges to all vertices of $C$.  By Lemma \ref{lem:plug} these cycles $C$ drop out of the basis, and so $\Gb$ has a basis consisting only of gadget end-cycles.    
   \end{proof}


\end{document}